\documentclass[12pt]{article}
%http://tex.stackexchange.com/questions/7896/no-room-for-a-new-dimen-when-including-tikz
\usepackage[utf8]{inputenc}%(only for the pdftex engine)
\usepackage{enumerate}

\usepackage{upgreek}

%%%%%%%%%%%%%%%%%%%%%%%%%%%%%%%%%%%%%%%%%%%%%%%%%%%%%%%%%%%%%%%%%%%%%%%%%%%%%%%%%%%%%%%%%%%%%%%%%%%%%%%%%%%%%%%%%%%%%%%%%%%%%%%%%%%%%%%%%%%%%
%% Christian's macros %%%%%%%%%%%%%%%%%%%%%%%%%%%%%%%%%%%%%%%%%%%%%%%%%%%%%%%%%%%%%%%%%%%%%%%%%%%%%%%%%%%%%%%%%%%%%%%%%%%%%%%%%%%%%%%%%%%%%%%

\usepackage{authblk}
\usepackage{amsmath,amssymb,amsthm}
\usepackage[margin=2.5cm]{geometry}
\usepackage{mathptmx}

\newtheorem{thm}{Theorem}
\newtheorem{dfn}[thm]{Definition}

\newcommand{\R}{\ensuremath{\mathbb{R}}}
\newcommand{\supp}{\mathrm{supp}}
\newcommand{\FEAPS}{F\hspace{-0.6mm}E_\mathrm{APS}}
\newcommand{\APS}{\mathrm{APS}}
\newcommand{\aAPS}{\mathrm{aAPS}}
\newcommand{\Adach}{\widehat{\mathrm{A}}}
\newcommand{\dV}{\, dV}
\newcommand{\Pspin}{P_\mathrm{Spin}}

%%%%%%%%%%%%%%%%%%%%%%%%%%%%%%%%%%%%%%%%%%%%%%%%%%%%%%%%%%%%%%%%%%%%%%%%%%%%%%%%%%%%%%%%%%%%%%%%%%%%%%%%%%%%%%%%%%%%%%%%%%%%%%%%%%%%%%%%%%%%%
%% Lars' macros %%%%%%%%%%%%%%%%%%%%%%%%%%%%%%%%%%%%%%%%%%%%%%%%%%%%%%%%%%%%%%%%%%%%%%%%%%%%%%%%%%%%%%%%%%%%%%%%%%%%%%%%%%%%%%%%%%%%%%%%%%%%%

\usepackage{etex}

\newtheorem{theorem}[thm]{Theorem}

\newtheorem{lemma}[thm]{Lemma}
\newtheorem{definition}[thm]{Definition}

\usepackage{hyperref}

\newcommand{\arxivref}[1]{\href{http://www.arxiv.org/abs/#1}{{arXiv.org:#1}}}

\newcommand{\blue}[1]{{\color{blue}#1}}

\usepackage{mathrsfs}
\usepackage{eucal}

\newcommand{\NatNum}{\mathbb{N}}

%%%%%%%%%%%%%%%%%%%%%%%%%%%
%% Basic Mathematics
\newcommand{\Naturals}{\mathbb{N}}

\newcommand{\Reals}{\mathbb R}
\newcommand{\Complex}{\mathbb C}

\newcommand{\Id}{\text{Id}}

\usepackage{ipa}

\usepackage{adjustbox}
\usepackage{tikz}
\usepackage{rotating} 
\usetikzlibrary{cd}
\usetikzlibrary{arrows} 
\usetikzlibrary{patterns}
\usetikzlibrary{shapes,snakes}
\usetikzlibrary{shapes}
\usetikzlibrary{decorations.pathmorphing}

\usetikzlibrary{datavisualization} 
\usetikzlibrary{datavisualization.formats.functions}
\usepackage{pgfmath}

%%%%%%%%%%%%%%%%%%%%%%%%%%%%%%%%%%%%

\setlength{\marginparwidth}{1.75cm}
\newcounter{mnotecount}[section]

\newcounter{mymnotecount}[section]
\renewcommand{\themymnotecount}{\thesection.\arabic{mymnotecount}}
\newcommand{\mymnote}[1]{\protect{\stepcounter{mnotecount}}${\raisebox{0.5\baselineskip}[0pt]{\makebox[0pt][c]{\color{blue}{\tiny\em$\bullet$\themymnotecount}}}}$\marginpar{\raggedright\tiny\em$\!\bullet$\themymnotecount:
\blue{#1}}\ignorespaces}

%turn off mnotes

%\renewcommand{\mnote}[1]{}
\renewcommand{\mymnote}[1]{}

\newcommand{\Scri}{\mathcal{I}}

\newcommand{\half}{\tfrac{1}{2}}         %half

\newcommand{\tr}{\text{tr}}

\newcommand{\Symop}{\mathbb{S}}
\newcommand{\FF}{\mathcal F}

\newcommand{\NPl}{l}
\newcommand{\NPn}{n}
\newcommand{\NPm}{m}
\newcommand{\NPmbar}{\bar m} 
\newcommand{\KSigma}{\Sigma} 
\newcommand{\KDelta}{\Delta}

\newcommand{\sDiv}{\mathscr{D}}
\newcommand{\sCurl}{\mathscr{C}}
\newcommand{\sCurlDagger}{\mathscr{C}^\dagger}
\newcommand{\sTwist}{\mathscr{T}}

\newcommand{\sfrak}{\mathfrak{s}}
\newcommand{\GenVec}{\nu} % general vector field

\newcommand{\mass}{m}

%%%%%%%%%%%%%%%%%%%Andersson-Blue KerrWave Macros%%%%%%%%%%%%%%%%%%
\newcommand{\Indicator}{\mathbf{1}}

\newcommand{\solu}{\psi}
%%%%%%%%%%%%%%%%%%%%%%%%%%%%%%%%%%%%%%%%%%%%%%%%%%%%%%%%%%%%%%%
%% Integrated Morawetz
\newcommand{\localiseAwayFromPhotonOrbits}{\Indicator_{r\not\eqsim 3M}}

%% The first-order Morawetz estimate

%% Model energy
\newcommand{\ModelEnergyThree}{E_{\text{model},3}}

%%%%%%%%%%%%%%%%%%%%%%%%%%%%%%%%%%%%%%%%%%%%%%%%%%%%%%%%%%%%%%%%%%%%
%%
%%  All constants and small parameters
%%
%%%%%%%%%%%%%%%%%%%%%%%%%%%%%%%%%%%%%%%%%%%%%%%%%%%%%%%%%%%%%%%%%%%%
\newcommand{\newepsilon}[3]{\newcommand{#1}{#2}}
 
% First theorem (bounded energy)
\newepsilon{\ConsantIntroUniformBound}{C}{C_1}
\newepsilon{\epsilonSlowRotationIntroUniformBound}{\bar{a}}{\bar{a}_1}
% Second theorem (decay)
\newepsilon{\ConstantIntroDecay}{C}{C_2}
\newepsilon{\epsilonSlowRotationIntroDecay}{\bar{a}}{\bar{a}_2}
% Corollary (bounded solu)
\newepsilon{\ConsantIntroUniformBoundsolu}{C}{C_3}
\newepsilon{\epsilonSlowRotationIntroUniformBoundSolu}{\bar{a}}{\bar{a}_3}
% Spherical Sobolov constant
\newepsilon{\ConstantSTwoSobolev}{C}{C_{S}}
\newepsilon{\ConstantLEpsilonL}{C}{C_{\OpL}}
% T blend
\newepsilon{\epsilonInvBlendLocation}{r_{\fnBlend}}{r_{\fnBlend}}
\newepsilon{\epsilonSlowRotationNearEnergyPositive}{\bar{a}}{\bar{a}_{T}}
% Choosing the Morawetz weights
\newepsilon{\epsilonMorawetzSlowRotation}{\bar{a}}{\bar{a}_{\text{Morawetz}}}
\newepsilon{\epsilonMorawetzPhotonWidth}{\bar{r}}{\bar{r}_{\text{Morawetz}}}
\newepsilon{\epsilonMorawetzTurnOndtSquared}{\epsilon_{\dt^2}}{\epsilon_{\dt^2}}
\newepsilon{\epsilonMorawetzTurnOndtSquaredUpperBound}{\overline{\epsilonMorawetzTurnOndtSquared}}{\epsilon_{\dt^2,\text{upper bound}}}
\newepsilon{\epsilonInverseErrorInMorawetzBeforeHardy}{C}{C_\text{Morawetz error}}
\newepsilon{\epsilonInvIntegratedMorawetzHomo}{C_1}{C_{\text{Homogeneous Morawetz}}}
\newepsilon{\epsilonInvIntegratedMorawetzHomoB}{C_2}{C_{\text{Homogeneous Morawetz,2}}}
%% Hardy
\newepsilon{\epsilonHardySlowRotation}{\bar{a}}{\bar{a}_\text{Hardy}}

% Integrated Morawetz
\newepsilon{\epsilonInvIntegratedMorawetz}{C}{C_{\text{Morawetz}}}
% Closing Morawetz
\newepsilon{\epsilonBoundedEnergySlowRotation}{\bar{a}}{\bar{a}_E}
\newepsilon{\epsilonInvBoundedEnergy}{C}{C_E}
\newepsilon{\epsilonInvBoundedEnergyInner}{C'}{C_E'}
% Reformulated energy-momentum
\newepsilon{\epsilonSlowRotationDominantEnergy}{\bar{a}}{\bar{a}_{DEC}}
\newepsilon{\ConstantDominantEnergy}{C}{C_{DEC}}
% Reformulated energy relations
\newepsilon{\epsilonSlowRotationRelationBetweenEnergies}{\bar{a}}{\bar{a}_{\tilde{E}}}
\newepsilon{\ConstantRelationBetweenEnergies}{C}{C_{\tilde{E}}}
% K timelike
\newepsilon{\epsilonSlowRotationKTimelike}{\bar{a}}{\bar{a}_{TL}}
% K energy
\newepsilon{\epsilonSlowRotationKEnergy}{\bar{a}}{\bar{a}_{KE}}

% Inside K light-cone localistaion Morawetz argument
\newepsilon{\epsilonSlowRotationLightConeLocalise}{\bar{a}}{\bar{a}_{LC}}
\newepsilon{\ConstantLightConeLocalisation}{C}{C_{LC}}

% Closing the K argument
\newepsilon{\epsilonSlowRotationInK}{\bar{a}}{\bar{a}_K}
\newepsilon{\ConstantInK}{C}{C_K}
\newepsilon{\ConstantInKexp}{C'}{C_K'}
% K other surfaces
\newepsilon{\epsilonSlowRotationOtherSfcs}{\bar{a}}{\bar{a}_{OS}}
\newepsilon{\ConstantOtherSfcs}{C}{C_{OS}}
\newepsilon{\ConstantOtherSfcsexp}{C'}{C_{OS}'}

% Stationary decay

\newepsilon{\epsilonSlowRotationStationaryDecay}{\bar{a}}{\bar{a}_{\text{SD}}}
\newepsilon{\ConstantStationaryDecayexp}{C'}{C_{SD}'}

% Near Decay
\newepsilon{\epsilonSlowRotationND}{\bar{a}}{\bar{a}_{ND}}
\newepsilon{\ConstantND}{C}{C_{\text{ND}}}
\newepsilon{\ConstantNDexp}{C'}{C_{\text{ND}}'}
% Far almost-null energy
\newepsilon{\epsilonSlowRotationFarAlmostNullEnergy}{\bar{a}}{\bar{a}_{FANE}}
\newepsilon{\ConstantFarAlmostNullEnergy}{C}{C_{FANE}}
\newepsilon{\ConstantFarAlmostNullEnergyexp}{C'}{C_{FANE}'}
% Far decay
\newepsilon{\epsilonSlowRotationFarDecay}{\bar{a}}{\bar{a}_{FD}}
\newepsilon{\ConstantFarDecay}{C}{C_{FD}}
\newepsilon{\ConstantFarDecayExp}{C'}{C_{FD}'}

%%%%%%%%%%%%%%%%%%%%%%%%%%%%%%%%%%%%%%%%%%%%%%%%%%%%%%%%%%%%%%%
%% Norms

\newcommand{\normPtwiseTn}[2]{|#2|_{#1}}

\newcommand{\OpL}{\mathcal{L}}

%%%%%%%%%%%%%%%%%%%%%%%%%%%%%%%%%%%%%%%%%%%%%%%%%%%%%%%%%%%%%%%

%%%%%%%%%%%%%%%%%%%%%%%%%%%%%%%%%%%%%%%%%%%%%%%%%%%%%%%%%%%%%%%%%%%%%%%%
%% Standard Kerr quantities
%\newcommand{\KDelta}{\Delta}
%\newcommand{\KSigma}{\Sigma}

\newcommand{\rp}{r_+}

%%%%%%%%%%%%%%%%%%%%%%%%%%%%%%%%%%%%%%%%%%%%%%%%%%%%%%%%%%%%%%%%%%%%%%%%
%% Derivatives

\newcommand{\dr}{\partial_r}

\newcommand{\dt}{\partial_t}
\newcommand{\dtheta}{\partial_\theta}
\newcommand{\dphi}{\partial_\phi}

\newcommand{\dAng}{{\nabla\!\!\!\!/}}
%% Rotations

%%%%%%%%%%%%%%%%%%%%%%%%%%%%%%%%%%%%%%%%%%%%%%%%%%%%%%%%%%%%%%%%%%%%%%%%
%% Measures
\newcommand{\di}{\mathrm{d}} %{\text{d}}
\newcommand{\diFourNatural}{\gVol\di^4 x}

\newcommand{\diThree}{\di^3\mu}
\newcommand{\diFour}{\di^4\mu}

\newcommand{\gVol}{\sqrt{|\gMetric|}}

%% Hidden symmetries
\newcommand{\SymGeneraln}[1]{\mathbb{S}_{#1}}
\newcommand{\CQA}{S}
\newcommand{\ua}{{\underline{a}}}
\newcommand{\ub}{{\underline{b}}}

%% Some extras from this section
\newcommand{\Lie}{\mathcal{L}}

%% Symmetry indexed vectors and scalars.

%% Kerr symmetries

\newcommand{\SOSym}{\SymGeneraln{2}}
\newcommand{\OpQ}{Q}

%% Strategy doesn't get any commands, 
%% since it summarises later sections. 

%%%%%%%%%%%%%%%%%%%%%%%%%%%%%%%%%%%%%%%%%%%%%%%%%%%%%%%%%%%%%%%
%% Lagrangian formulation
%\newcommand{\Wave}{\square}

%\newcommand{\GenMomentumDensity}[1]{\mathcal{P}_{#1}} % The names of this and \GenMomentum are switched. 

%% This should be changed from clown to something like general

\newcommand{\hst}[1]{\Sigma_{#1}}

%%%%%%%%%%%%%%%%%%%%%%%%%%%%%%%%%%%%%%%%%%%%%%%%%%%%%%%%%%%%%%%%%%%%%%%%
%% changed font for conserved quantities to \boldsymbol{ } 
%% Symmetries and null geodesics

\newcommand{\GeodesicEnergy}{\boldsymbol{e}}
\newcommand{\GeodesicLz}{\boldsymbol{\ell_z}}

\newcommand{\KCarter}{\boldsymbol{k}}

\newcommand{\nq}{{n_\OpQ}}
\newcommand{\nt}{{n_t}}
\newcommand{\np}{{n_\phi}}

%%%%%%Andersson-Blue KerrMaxwell%%%%%%%%%%%%%%%%%%%%%%%%%%%%%%%%%

%%%%%%%%%%%%%%%%%%%%%%%%%%%%%%%%%%%%
% Energies and key integrals

\newcommand{\EnergyCrudeFArg}[1]{E_{}[#1]}
\newcommand{\EnergyCrudeFIArg}[1]{E_{\text{FI}}[#1]}
\newcommand{\BulkMorawetzF}{B_{\pm}}
\newcommand{\BulkFILowerOrder}{B_{0}}
\newcommand{\BulkFIOne}{B_1}
\newcommand{\BulkFITwo}{B_{2,0}}

%%%%%%%%%%%%%%%%%%%%%%%%%%%%%%%%%%%%
% Integrals and volume forms
\newcommand{\inthst}[1]{\int_{\hst{#1}}}
\newcommand{\intBulk}[1]{\int_0^{#1}\inthst{t}}

\newcommand{\diomega}{\sin\theta\di\theta\di\phi}

\newcommand{\diFourFI}{\di^4\mu_{\mathrm{FI}}}

%%%%%%%%%%%%%%%%%%
  %To be the border between near and far.

\newcommand{\pt}{\partial_t}
\newcommand{\pr}{\partial_r}

\newcommand{\pAng}{\not\!\nabla}

\newcommand{\FinalTime}{T}

%%%%%%%%%%%%%%%%%%%%%%%%%%%%%%%%%%%%
% Components and ancilliary quantities
\newcommand{\MaxF}{\textrm{F}}

\newcommand{\MaxFTotal}{\textrm{F}_{\text{total}}}

\newcommand{\phii}{\phi_i}

\newcommand{\gMetric}{g}

  %Vector potential

%%%%%%%%%%%%%%%%%%%%%%%%%%%%%%%%%%%%
% Some stuff I don't know where to put

\newcommand{\Stwo}{\mathbb{S}^2}

\newcommand{\solub}{\bar{\Upsilon}}

\newcommand{\soluTotal}{\Upsilon_{\text{total}}}

%%%%%%%%%%%%%%%%%%%%%%%%%%%%%%%%%%%%
% Epsilons and constants
\newcommand{\epsilonMain}{\frac{|a|}{M}}
\newcommand{\epsilonSlowRotationIntro}{\bar{\epsilon}_a}
\newcommand{\ConstantIntro}{C}

\newcommand{\phihat}{\widehat{\phi}}

\newcommand{\MidMet}{\mathcal{M}}

%%%%%%%%%%%%%%%%%%%%%%%%%%%%%%%%%%%%%%%%%%%%%%%%%%%%%%%%%%%%%%%%%%%%%%%%%%%%%%%%%%%%%%%%%%%%%%%%%%%%%%%%%%%%%%%%%%%%%%%%%%%%%%%%%%%%%%%%%%%%%

\begin{document}

%  \articletype{...}

%  \author*[1]{...}
  \author[1]{Lars Andersson}
  \author[2]{Christian B{\"a}r} 
%  \runningauthor{L.~Andersson and C.~B{\"a}r}
  \affil[1]{Albert Einstein Institut, Am M{\"u}hlenberg 1, 14476 Potsdam, Germany}
  \affil[2]{Universit{\"a}t Potsdam, Institut f{\"u}r Mathematik, Karl-Liebknecht-Str. 24-25, 14476 Potsdam, Germany}
  \title{Wave and Dirac equations on manifolds}
%  \runningtitle{Wave equations on manifolds}
%  \subtitle{...}

\maketitle

  \abstract{We review some recent results on geometric equations on Lorentzian manifolds such as the wave and Dirac equations.
  This includes well-posedness and stability for various initial value problems, as well as results on the structure of these equations on black-hole spacetimes (in particular, on the Kerr solution), the index theorem for hyperbolic Dirac operators and properties of the class of Green-hyperbolic operators.
  
  \noindent
  \textbf{Mathematical Subject Classification:} 53C27, 53C50, 83C57, 83C60, 58J45
  
  \noindent
  \textbf{Keywords:}{ wave equation, Dirac equation, globally hyperbolic Lorentzian manifold, Cauchy problem, Goursat problem, black-hole spacetime, Kerr solution, Killing spinor, index theorem, chiral anomaly, Green-hyperbolic operator}}
%  \classification[MSC]{53C27, 53C50, 83C57, 83C60, 58J45}
%   \communicated{...}
%   \dedication{...}
%   \received{...}
%   \accepted{...}
%   \journalname{...}
%   \journalyear{...}
%   \journalvolume{..}
%   \journalissue{..}
%   \startpage{1}
%   \aop
%   \DOI{...}

%%%%%%%%%%%%%%%%%%%%%%%%%%%%%%%%%%%%%%%%%%%%%%%%%%%%%%%%%%%%%%%%%%%%%%%%%%%%%%%%%%%%%%%%%%%%%%%%%%%%%%%%%%%%%%%%%%%%%%%%%%%%%%%%%%%%%%%%%%%%%

\section*{Introduction}

Spacetime  
in general relativity is modeled by a Lorentzian manifold.
The physically relevant field equations are geometric partial differential equations on these manifolds.
The most prominent examples are wave and Dirac equations.
The present article surveys some progress in the study of these geometric equations obtained in the past years within the research network SFB 647.

Wave and Dirac equations are hyperbolic partial differential equations (PDEs) and the other equations under consideration share some aspects of hyperbolicity as well.
In order to obtain a reasonable analytic solution theory we have to exclude causal pathologies like closed timelike loops in the underlying manifold.
The right class of Lorentzian manifolds turns out to be that of globally hyperbolic ones.
In the first section we quickly review their most important properties.

Considering equations of hyperbolic type on globally hyperbolic spacetimes leads to the study of initial value problems. 
The second section provides a discussion of the Cauchy problem for wave equations and for Dirac equations, the Goursat problem (i.e.\ the characteristic initial value problem) for wave equations and the Cauchy problem for the vacuum Einstein equation.
From a physical point of view, well-posedness of these initial value problems is essential for predictability.

The singularity theorems of Hawking and Penrose indicate that vacuum spacetimes with strong gravitational fields will typically be singular in the sense that they have a non-trivial Cauchy horizon. 
According to the weak cosmic censorship conjecture such singularities in isolated systems must be hidden behind event horizons.
Such models are known as black hole solutions.
The most prominent examples are the Schwarzschild solution describing a static black hole and the Kerr solutions describing rotating black holes (in the subextreme case).
The form of the spacetime metric of these models is known explicitly.

The Kerr black hole solution is conjectured to be the unique stationary, asymptotically flat, vacuum spacetime containing a non-degenerate black hole. 
In addition to being unique, the Kerr spacetime is conjectured to be dynamically stable, in the sense that the maximal Cauchy development of any sufficiently small perturbation of subextreme Kerr Cauchy data is asymptotic to the future, to a subextreme member of the Kerr family. 

We may refer to the above conjectures as the \emph{black hole uniqueness}, and \emph{black hole stability} conjectures. Both of these conjectures, which are of central importance in general relativity and astrophysics are open in full generality, in spite of much work on the topic during the last 50 years. 

The importance of the Kerr solution, and an analysis of its properties, is explained by the fact that according to our current understanding, black holes are ubiquitous in the universe, in particular most galaxies have a supermassive black hole at their center, and these play an important role in the life of the galaxy. Also our galaxy has at its center a very compact object, Sagittarius A*, with a diameter of less than one astronomical unit, and a mass estimated to be $10^6$ $\mass_{\odot}$. Evidence for this includes observations of the orbits of stars in its vicinity.

In section 3 we provide background on massless field equations on black hole spacetimes and on the geometry of the Kerr solution.
The subsequent section contains new results on integrated local energy decay for solutions of massless field equations on Kerr spacetimes.
Moreover, higher order conservation laws for Maxwell equations are derived and integrability conditions on the linearized Einstein equations on Kerr are established.
All of this relies on the existence of Killing-Yano forms on Kerr spacetimes.

In section 5 we describe a recently found analogue to the Atiyah-Patodi-Singer index theorem on globally hyperbolic Lorentzian spacetimes.
In contrast to the original Riemann version of this index theorem, the spectral boundary conditions admit a natural physical interpretation in the Lorentzian case in terms of a particle-antiparticles splitting.
We indicate how the Lorentzian index theorem can be applied to compute the chiral anomaly in quantum field theory.
The chiral anomaly is concerned with the fact that particles may be created by gravity or an external field without also creating their corresponding antiparticles.

Well-posedness of the Cauchy problem implies the existence of Green's operators which are solution operators satisfying certain causality conditions.
In the last section we turn things around and deal with those differential operators which possess Green's operators even if they do not have a well-posed Cauchy problem.
This class is very large; it is closed under many natural operations producing new operators out of given ones.
Yet much of the analytic theory is shown to carry over to these equations.
This is important for quantizing these fields in the framework of algebraic quantum field theory on curved backgrounds.

%%%%%%%%%%%%%%%%%%%%%%%%%%%%%%%%%%%%%%%%%%%%%%%%%%%%%%%%%%%%%%%%%%%%%%%%%%%%%%%%%%%%%%%%%%%%%%%%%%%%%%%%%%%%%%%%%%%%%%%%%%%%%%%%%%%%%%%%%%%%%

\section{Globally-hyperbolic Lorentzian manifolds}

We describe the class of Lorentzian manifolds on which wave equations have reasonable analytic properties.
For a general introduction to Lorentzian geometry see one of the standard textbooks such as \cite{MR1384756,MR0424186,MR719023}.

Throughout this section let $M$ denote a connected time-oriented Lorentzian manifold.
For $A\subset M$ let $J^+(A)$ be the \emph{causal future} of $A$, i.e.\ the set of all points of $M$ which can be reached by future-directed causal curves starting in $A$.
Similarly, one defines the \emph{causal past} $J^-(A)$.
We also write $J(A) = J^+(A) \cup J^-(A)$.
A subset $A\subset M$ is called \emph{past compact} if $A\cap J^-(p)$ is compact for all $p\in M$.
Similarly, one defines \emph{future compact} subsets.

A subset $S\subset M$ is called a \emph{Cauchy hypersurface} if every inextendible timelike curve meets $S$ exactly once.
There is no a-priori regularity assumption on Cauchy hypersurfaces but it turns out that they are Lipschitz hypersurfaces, i.e.\ can locally be written as graphs of a Lipschitz function.
Moreover, any Cauchy hypersurface is a closed subset of $M$, every inextendible causal curve meets $S$ (possibly more often than once) and any two Cauchy hypersurfaces of $M$ are homeomorphic.

We say that $M$ satisfies the \emph{causality condition} if $M$ does not contain a closed causal loop.

\begin{thm}[Bernal, S\'anchez]\label{thm:globhyp}
The following are equivalent:
\begin{enumerate}[(1)]
\item
There exists a Cauchy hypersurface in $M$;
\item
$M$ satisfies the causality condition and the intersection $J^+(p)\cap J^-(p)$ is compact for all $p\in M$;
\item
The manifold $M$ is isometric to $\R\times S$ with metric $-N^2dt+g_t$ where $N$ is a smooth positive function (the \emph{lapse function}), $g_t$ is a Riemannian metric on $S$ depending smoothly on the parameter $t$ and all sets $\{t_0\}\times S$ are Cauchy hypersurfaces in $M$.
\end{enumerate}
\end{thm}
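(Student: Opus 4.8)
The plan is to establish the cycle of implications $(3)\Rightarrow(1)\Rightarrow(2)\Rightarrow(3)$, with essentially all of the analytic content sitting in the last step. The implication $(3)\Rightarrow(1)$ is immediate, since the slices $\{t_0\}\times S$ are Cauchy hypersurfaces by hypothesis. For $(1)\Rightarrow(2)$, let $S$ be a Cauchy hypersurface. A closed causal loop could, after an arbitrarily small timelike deformation and infinite repetition, be continued to an inextendible timelike curve missing $S$ or meeting it more than once, contradicting the defining property of $S$; hence $M$ is causal. Standard causality-theory arguments (push-up lemmas and the limit curve lemma) then show that $M$ coincides with the Cauchy development $D(S)$, that $M$ is strongly causal, and that the causal diamonds $J^+(p)\cap J^-(q)$ are compact; in particular $J^+(p)\cap J^-(p)=\{p\}$ is compact. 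This gives $(2)$.

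The core of the theorem is $(2)\Rightarrow(3)$, the Bernal--S\'anchez splitting theorem, which I would carry out in three stages. \emph{(a) Strong causality.} Although $(2)$ only postulates the causality condition, one first upgrades it: if strong causality failed at a point $p$, there would be future-directed causal curves returning arbitrarily close to $p$ while exiting a fixed neighbourhood; confining a tail of them inside a fixed compact causal diamond around $p$ and extracting a limit causal curve produces a closed causal curve through $p$ (or enough of one to manufacture a causality violation), a contradiction. \emph{(b) A continuous Cauchy time function.} Strong causality together with compact diamonds is global hyperbolicity in the classical sense, so Geroch's construction applies: choose a finite Borel measure $\mu$ on $M$, set $t^\pm(p)=\mu(J^\pm(p))$, and put $\tau_0=\log(t^-/t^+)$. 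Global hyperbolicity forces $t^\pm$, hence $\tau_0$, to be continuous and strictly increasing along future-directed causal curves, and every level set $\tau_0^{-1}(c)$ is a topological Cauchy hypersurface.

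\emph{(c) Smoothing, and the splitting.} One next replaces $\tau_0$ by a \emph{smooth} function $\tau$ with past-directed timelike gradient $\nabla\tau$ whose level sets are still Cauchy hypersurfaces. This is achieved by covering $M$ with charts equipped with explicit smooth local functions whose gradients are timelike and adapted to the local light cones, and summing them against a partition of unity arranged so that the gradients being added all lie in a common timecone --- whence timelikeness of the total gradient is preserved --- followed by a limiting argument ensuring the resulting function is still a time function with Cauchy level sets. Given such a Cauchy temporal function $\tau$, set $T=\nabla\tau/g(\nabla\tau,\nabla\tau)$, so that $d\tau(T)=1$ and $g(T,T)=1/g(\nabla\tau,\nabla\tau)<0$; then $T$ is a future-directed timelike field orthogonal to the level sets $S_t=\tau^{-1}(t)$, its integral curves are inextendible timelike curves (since the $S_t$ are Cauchy), so its flow $\Phi$ is complete and satisfies $\tau(\Phi_s(x))=\tau(x)+s$. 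Fixing $S=S_0$, the map $\R\times S\to M$, $(s,x)\mapsto\Phi_s(x)$, is a diffeomorphism turning $\tau$ into the $\R$-coordinate; pulling back $g$, orthogonality of $T$ to the slices removes all mixed terms, so $g=-N^2\,ds^2+g_s$ with $N=(-g(\nabla\tau,\nabla\tau))^{-1/2}>0$ smooth and $g_s$ the induced Riemannian metric on $S$, depending smoothly on $s$, and the slices $\{s_0\}\times S$ are the Cauchy hypersurfaces $S_{s_0}$. Renaming $s=t$ yields $(3)$.

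The main obstacle is step (c): converting the merely continuous Geroch time function into a smooth one whose gradient is everywhere timelike while keeping every level set Cauchy. Straightforward mollification destroys both properties, and the technical heart of the Bernal--S\'anchez argument is precisely the careful organization of the local smooth ``time functions with timelike gradient'' and of the partition of unity so that timelikeness survives summation. A secondary, smaller difficulty is stage (a), the replacement of ``strongly causal'' by ``causal'' in the hypotheses.
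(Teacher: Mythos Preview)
The paper does not actually prove this theorem; immediately after the statement it simply records that $(3)\Rightarrow(1)$ is trivial, that $(1)\Rightarrow(2)$ is classical (referring to O'Neill \cite{MR719023}), and that $(2)\Rightarrow(3)$ is the Bernal--S\'anchez structural result, citing \cite{MR2163568} and \cite{MR2294243}. Your sketch follows exactly this logical route and correctly expands what lies behind those citations: Geroch's volume-function construction of a continuous Cauchy time function, the Bernal--S\'anchez smoothing to a temporal function, and the orthogonal splitting via the flow of the normalized gradient. Your identification of the smoothing step (c) as the main analytic difficulty, and of the causal-to-strongly-causal upgrade (your step (a)) as the content of \cite{MR2294243}, matches how the paper apportions the references.

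One remark worth flagging: the printed hypothesis in (2) is compactness of $J^+(p)\cap J^-(p)$, which under the causality condition reduces to $\{p\}$; you implicitly work with the intended condition, compactness of $J^+(p)\cap J^-(q)$ for all $p,q$, and indeed your step (b) requires it. Taken literally, the printed (2) is just the causality condition and $(2)\Rightarrow(3)$ would fail (remove a point from Minkowski space), so this is a typo in the statement rather than a gap in your argument.
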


The implication (3)$\Rightarrow$(1) is trivial and the implication (1)$\Rightarrow$(2) has been known for a long time, see e.g.\ \cite[Cor.~39, p.~422]{MR719023}.
The implication (2)$\Rightarrow$(3) is a remarkable structural result due to A.\ Bernal and M.\ S\'anchez (combine \cite[Thm.~1.1]{MR2163568} with \cite[Thm.~3.2]{MR2294243}).

Manifolds satisfying the conditions in Theorem~\ref{thm:globhyp} are called \emph{globally hyperbolic}.
Many important relativistic models are globally hyperbolic such as Minkowski spacetime, Robertson-Walker spacetimes (in particular, Friedman cosmological models), the interior and the exterior Schwarschild models, deSitter spacetime and many more.

%%%%%%%%%%%%%%%%%%%%%%%%%%%%%%%%%%%%%%%%%%%%%%%%%%%%%%%%%%%%%%%%%%%%%%%%%%%%%%%%%%%%%%%%%%%%%%%%%%%%%%%%%%%%%%%%%%%%%%%%%%%%%%%%%%%%%%%%%%%%%

\section{Initial value problems}

We now describe some analytical facts on initial value problems for linear wave equations.

\subsection{Cauchy problem for linear wave equations}

Throughout this section let $M$ be a globally hyperbolic Lorentzian manifold and let $E\to M$ be a vector bundle.
A linear differential operator $P$ of second order with smooth coefficients acting on sections of $E$ is called \emph{normally hyperbolic} if its principal symbol is given by the Lorentzian metric, i.e.\ in local coordinates $P$ takes the form
$$
P = 
-\sum_{i,j=1}^n g^{ij}(x)\frac{\partial^2}{\partial x^i \partial x^j} 
+ \sum_{j=1}^n A_j(x) \frac{\partial}{\partial x^j} + B(x) \, .
$$
A linear wave equation is an equation of the form $Pu=f$ with given $f$ and an unknown section $u$.
By the \emph{Cauchy problem} we mean the problem of solving such a wave equation while imposing initial value conditions of zeroth and first order.
More precisely, let $S\subset M$ be a smooth spacelike Cauchy hypersurface and let $\mathfrak{n}$ be the future-directed unit normal vector field along $S$.
We fix a connection $\nabla$ on $E$.
Now the Cauchy problem consists of finding a section $u$ such that 
\begin{equation}
 \left\{\begin{array}{cccl}
Pu&=&f&\textrm{ on }M,\\
u&=&u_0&\textrm{ along }S,\\
\nabla_\mathfrak{n} u&=&u_1&\textrm{ along }S,
\end{array}\right.
\label{eq:CauchyProblem}
\end{equation}
for given $f$, $u_0$, and $u_1$.

\begin{thm}\label{thm:cauchyglobhyp}
Under the assumptions of this section the following holds:
\begin{enumerate}[(i)]
\item \emph{(Existence and uniqueness)}
For each  $u_0, u_1 \in C^\infty_c(S,E)$ and for each
$f\in C^\infty_c(M,E)$ there exists a unique $u\in C^\infty(M,E)$ solving \eqref{eq:CauchyProblem}.
\item \emph{(Stability)}
The solution $u$ depends continuously on the data $f$, $u_0$, and $u_1$.
\item \emph{(Finite propagation speed)}
The solution $u$ satisfies $\supp(u) \subset J(K)$ where $K=\supp(u_0)\cup \supp(u_1) \cup \supp(f)$.
\end{enumerate}
\end{thm}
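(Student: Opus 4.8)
The plan is to run the classical argument for normally hyperbolic operators (Leray, Choquet-Bruhat; as presented, e.g., in the monograph of B\"ar--Ginoux--Pf\"affle): reduce to vanishing Cauchy data, prove local solvability by energy methods, propagate uniqueness and the support bound, and patch the local solutions into a global one using the structure of globally hyperbolic manifolds.

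First I would reduce to the case $u_0 = u_1 = 0$. Near the Cauchy hypersurface $S$ one has a smooth spacelike foliation — the normal exponential map, or a local version of the splitting $M \cong \R\times S$ from Theorem~\ref{thm:globhyp} — and using it one constructs $v\in C^\infty_c$ defined near $S$ with $v|_S = u_0$ and $\nabla_{\mathfrak n}v|_S = u_1$. Replacing $u$ by $u-v$ and $f$ by $f - Pv$, it suffices to solve $Pu = f$ with trivial initial data and $f\in C^\infty_c(M,E)$; cutting $f$ off, one may even assume it is supported in a small globally hyperbolic piece of $M$.

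The analytic heart is an energy estimate. For $P$ normally hyperbolic and a lens-shaped domain $\Omega$ caught between two leaves $S_{t_0}$, $S_{t_1}$ of the foliation, one defines an energy $\mathcal E(t) = \int_{S_t}\big(|\nabla u|^2 + |u|^2\big)$ (with respect to the metrics $g_t$), differentiates in $t$, uses the divergence theorem together with the hyperbolic form of $P$ to bound $\mathcal E'(t)$ by $\mathcal E(t)$ plus $\int_{S_t}|Pu|^2$, and concludes by Gr\"onwall; $H^k$-versions follow by commuting $P$ with derivatives. This estimate immediately gives uniqueness in $\Omega$ and finite propagation speed: if the data vanish outside $K$, then $u$ vanishes outside $J(K)$. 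Combined with duality (Hilbert-space/Hahn--Banach) methods — or, equivalently, with a Hadamard parametrix and a Riesz-distribution iteration — it also yields existence of a solution, first on a globally hyperbolic neighbourhood of $S$ and then on any causal slab $J^+(S_{t_0})\cap J^-(S_{t_1})$. Global hyperbolicity, via the well-behaved spacelike foliation of Theorem~\ref{thm:globhyp}, is exactly what makes these lens domains and the Gr\"onwall argument work. To globalize, I would solve the Cauchy problem on the slabs $[0,n]\times S$ and $[-n,0]\times S$ for all $n$, obtaining sections $u_n$ that agree on overlaps by the uniqueness just established, and let $n\to\infty$ to get a global $u\in C^\infty(M,E)$; past/future compactness of the relevant causal sets and compactness of $\supp f$ ensure the pieces fit together and that $\supp u\subset J(K)$ persists.

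Finally, stability is essentially already contained in the previous step: the energy estimates bound $u$, locally uniformly on $M$, in terms of finitely many $C^\infty_c$-seminorms of $(f,u_0,u_1)$, which is continuity of the linear solution map in the natural LF/Fr\'echet topologies; alternatively one observes that the graph of the solution map is closed and invokes a closed-graph theorem. The step I expect to be the main obstacle is the interplay of local existence and gluing — obtaining the energy estimate with enough uniformity to feed both the functional-analytic existence argument and the compatibility of the local patches, and then bookkeeping the supports carefully so that the finite-propagation statement survives the passage to the global solution.
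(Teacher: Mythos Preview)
The paper does not actually prove this theorem; immediately after the statement it simply writes ``A proof can be found in \cite[Thms.~3.2.11 and 3.2.12]{MR2298021}'' and moves on. Your sketch is a correct outline of the classical argument and is, in substance, the approach of the cited B\"ar--Ginoux--Pf\"affle monograph, so there is nothing to compare against in the paper itself.

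One minor remark on emphasis: the reference \cite{MR2298021} builds existence primarily via the Riesz-distribution/Hadamard parametrix construction of local fundamental solutions (which you mention as an alternative), rather than via a duality argument based on energy estimates; the energy inequality enters mainly for uniqueness and finite propagation speed. Both routes are valid, and you acknowledge both, so this does not affect correctness.
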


A proof can be found in \cite[Thms.~3.2.11 and 3.2.12]{MR2298021}.
Assertions (i) and (ii) are often combined by stating that the Cauchy problem is \emph{well posed}.

This theorem also holds for data of lower regularity; we can, for instance, impose Sobolev regularity.
More precisely, the theorem holds for $u_0\in H^k_c(S,E)$, $u_1\in H^{k-1}_c(S,E)$, and $f$ being locally $L^2$ in time, $H^{k-1}$ in space and having spatially compact support, $k\in\R$.
The solution $u$ will then be in a suitable space of finite-energy sections, see \cite[Thm.~8]{MR3316713} for details.

\subsection{Goursat problem for linear wave equations}

There is an interesting borderline case between initial value problems and boundary value problems, namely the \emph{characteristic initial value problem} also known as the \emph{Goursat problem}.
Here one replaces the smooth spacelike Cauchy hypersurface along which the initial conditions have been required in the Cauchy problem by a partial Cauchy hypersurface\footnote{A \emph{partial Cauchy hypersurface} is a closed Lipschitz hypersurface which is met by each timelike curve at most once.} $S$ which is characteristic meaning that the induced metric on $S$ is degenerate.
Typical examples are of the form $S=\partial J^+(A)$ for any nonempty subset $A\subset M$.
It turns out that one can prescribe the initial values along $S$ but one cannot impose any condition on the derivative.

\begin{thm}[{B\"ar, Tagne Wafo \cite[Thm.~23]{MR3316713}}]\label{thm:CharacteristicUniqueness}
Let $S\subset M$ be a characteristic partial Cauchy hypersurface.
Assume that $J^+(S)$ is past compact.

Then for any locally $L^2$-section $f$ on $M$ with spatially compact support and any $u_0\in H^1_c(S,E)$ there exists a section $u$ on $M$ in  a suitable finite-energy space such that $Pu=f$ on $J^+(S)$ and $u|_S=u_0$.
On $J^+(S)$, $u$ is unique.
\end{thm}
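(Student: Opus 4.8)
\emph{Proof plan.} I would treat the uniqueness and the existence parts separately, basing both on energy estimates for the normally hyperbolic operator $P$ of the kind that underlie Theorem~\ref{thm:cauchyglobhyp}. The key analytic fact is the behaviour of the energy flux of the stress-energy tensor of $P$, contracted with a future-directed timelike vector field, across a \emph{characteristic} hypersurface: such a flux involves only derivatives of the section tangent to $S$, so it is comparable to the squared $H^1$-norm of the restriction $u|_S$. This is precisely why one may prescribe $u|_S$ but no transverse derivative, and it is the engine of both halves of the argument.

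\textbf{Uniqueness.} Let $v$ be a finite-energy section with $Pv=0$ on $J^+(S)$ and $v|_S=0$. Fix $p\in J^+(S)$; by the past-compactness hypothesis the region $R:=J^+(S)\cap J^-(p)$ is compact, with boundary consisting of a piece of $S$ and a piece of the past light cone of $p$. Approximating $v$ by smooth sections and applying the divergence theorem on $R$ to the stress-energy tensor contracted with a future timelike field: the bulk term vanishes because $Pv=0$, the contribution of the part of $\partial R$ lying in $S$ vanishes because $v|_S=0$, and the flux through the null part of $\partial R$ has a definite sign. One deduces that this null flux vanishes, and hence, tracing back along the cone to its tip, that $v(p)=0$. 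Thus $v\equiv 0$ on $J^+(S)$.

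\textbf{Existence.} I would approximate $S$ by smooth spacelike hypersurfaces $S_j$ lying slightly to the past of $S$ and converging to it, using the temporal splitting $M\cong\R\times S'$ of Theorem~\ref{thm:globhyp}. On each $S_j$ solve the Cauchy problem of Theorem~\ref{thm:cauchyglobhyp}, in its Sobolev version, with right-hand side $f$ (which already has the admissible regularity), vanishing first-order datum, and zeroth-order datum a mollification of $u_0$ transported to $S_j$; this produces finite-energy solutions $u_j$ on $M$. The decisive step is a uniform bound: on each fixed compact subset of $J^+(S)$ the finite-energy norm of $u_j$ is controlled, independently of $j$, by $\|u_0\|_{H^1(S)}+\|f\|_{L^2}$. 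This is obtained by propagating the energy estimate from $S_j$ to a reference slice and back, using that as $S_j$ degenerates onto the null hypersurface $S$ the only components of the energy on $S_j$ that threaten to blow up are the tangential ones, and these remain bounded by the $H^1$-norm of $u_0$. By weak compactness in the finite-energy space a subsequence of $(u_j)$ converges weakly to some $u$, which lies in the required space by lower semicontinuity of the norm; since $P$ has smooth coefficients, passing to the limit distributionally yields $Pu=f$ on $M$, and continuity of the trace map together with $S_j\to S$ and $u_j|_{S_j}\to u_0$ gives $u|_S=u_0$ (after, if necessary, upgrading to strong convergence near $S$ via the energy estimate). Uniqueness on $J^+(S)$ is the part already proved.

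\textbf{Main difficulty.} The heart of the matter is the uniform energy estimate as the spacelike approximants $S_j$ collapse onto the characteristic hypersurface $S$: a crude estimate sees the lapse of the approximating foliation degenerate and the energy density diverge, so one must extract the cancellation coming from the null structure of $S$ --- equivalently, arrange the multiplier field and the data on $S_j$ so that only the $H^1$-controlled tangential derivatives of $u_0$ enter --- and ensure the traces $u_j|_{S_j}$ converge to $u_0$ strongly enough to survive the limit. An alternative route, avoiding the degenerating foliation altogether, is functional-analytic: turn the flux bound ``the energy flux through $S$ is $\lesssim\|u|_S\|_{H^1}^2$'' into an estimate for a suitable linear functional on the space of solutions of the backward adjoint Goursat problem, extend it by the Hahn--Banach theorem, and represent it by the desired $u$; this shifts the difficulty onto establishing well-posedness for the adjoint mixed problem.
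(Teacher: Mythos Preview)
The paper under review is a survey and does \emph{not} contain a proof of this theorem; it merely states the result with a citation to the original article \cite{MR3316713}. There is therefore nothing in the present paper to compare your proposal against.

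That said, a brief assessment of your plan on its own merits: the uniqueness half is the standard energy argument and is essentially correct. For existence, however, your scheme of approximating the characteristic hypersurface $S$ by spacelike Cauchy hypersurfaces $S_j$ and passing to the limit introduces exactly the difficulty you yourself flag --- the uniform energy estimate as the foliation degenerates --- and moreover leaves unclear how one ``transports $u_0$ to $S_j$'' when $S$ (say a light cone) is not even homeomorphic to a Cauchy hypersurface. A cleaner route, and the one actually taken in \cite{MR3316713}, avoids this altogether: first extend $u_0\in H^1_c(S,E)$ to a section $\tilde u_0$ of finite-energy regularity on all of $M$ (a Sobolev extension, using that $S$ is Lipschitz), thereby reducing to the case $u_0=0$ by replacing $u$ with $u-\tilde u_0$ and $f$ with $f-P\tilde u_0$. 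For the case $u_0=0$, pick a smooth spacelike Cauchy hypersurface $\Sigma$ to the past of $\supp f\cap J^+(S)$ (this is where past-compactness is used), solve the ordinary Cauchy problem from $\Sigma$ with zero Cauchy data and right-hand side $f\cdot\Indicator_{J^+(S)}$; by finite propagation speed the solution vanishes on $J^-(S)\setminus J^+(S)$ and in particular has vanishing trace on $S$. This uses only the already-established Cauchy theory (Theorem~\ref{thm:cauchyglobhyp} in its Sobolev version) and an extension lemma, with no degenerating limit.
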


%See \cite[Thm.~23]{MR3316713} for a proof and a thorough discussion.
Examples show easily that the assumption on $J^+(S)$ being past compact cannot be dropped.

\subsection{Cauchy problem for Dirac equations}

Theorem~\ref{thm:cauchyglobhyp} allows us to derive well-posedness also for the Cauchy problem for Dirac equations.
A linear differential operator $D$ of first order acting on sections of $E$ is called a \emph{Dirac-type operator} if $P=D^2$ is normally hyperbolic.
Examples are the classical Dirac operators acting on sections of the spinor bundle $E=\Sigma M$ (see \cite{MR701244} for details) or, more generally, on sections of a twisted spinor bundle $E=\Sigma M\otimes F$ where $F$ is any ``coefficient bundle'' equipped with a connection.

Particular examples are the \emph{Euler operator} $P=i(d-\delta)$ on $E=\bigoplus_k \Lambda^kT^*M$ and, in dimension $\dim(M)=4$, the \emph{Buchdahl operators} on $\Sigma M\otimes \Sigma_+^{\odot k}M$, see \cite[Sec.~2.5]{MR3289848} for details.
The following is the basic well-posedness property for the Cauchy problem for Dirac operators.

\begin{thm}\label{thm:cauchyglobhypDirac}
Let $D$ be a Dirac-type operator acting on sections of a vector bundle $E$ over a globally hyperbolic manifold $M$. 
Let $S\subset M$ be a smooth spacelike Cauchy hypersurface.
Then
\begin{enumerate}[(i)]
\item \emph{(Existence and uniqueness)}
For each  $u_0 \in C^\infty_c(S,E)$ and for each $f\in C^\infty_c(M,E)$ there exists a unique $u\in C^\infty(M,E)$ solving $Du=f$ on $M$ and $u=u_0$ along $S$.
\item \emph{(Stability)}
The solution $u$ depends continuously on the data $f$ and $u_0$.
\item \emph{(Finite propagation speed)}
The solution $u$ satisfies $\supp(u) \subset J(K)$ where $K=\supp(u_0)\cup \supp(f)$.
\end{enumerate}
\end{thm}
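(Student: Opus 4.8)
The plan is to reduce the first-order Cauchy problem for $D$ to the second-order Cauchy problem for the normally hyperbolic operator $P=D^2$ and then invoke Theorem~\ref{thm:cauchyglobhyp}. The structural input that makes this work is that the principal symbol $\sigma_D$ of $D$ is invertible on the conormal direction of $S$: since $S$ is spacelike, $\mathfrak{n}^\flat$ is non-null, and $\sigma_D(\mathfrak{n}^\flat)^2=\sigma_{D^2}(\mathfrak{n}^\flat)$ is a nonzero scalar multiple of $\mathrm{id}_E$ because $P=D^2$ is normally hyperbolic. Consequently, in a neighbourhood of $S$ one may write $D=\sigma_D(\mathfrak{n}^\flat)\nabla_{\mathfrak{n}}+D^{\mathrm{tan}}$ (up to a harmless sign), where $D^{\mathrm{tan}}$ is a first-order operator differentiating only in directions tangent to $S$, so that $D^{\mathrm{tan}}u|_S$ depends only on $u|_S$. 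In particular, the restriction $Du|_S$ determines, and is determined by, $u|_S$ together with $\nabla_{\mathfrak{n}}u|_S$.

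Given $u_0\in C^\infty_c(S,E)$ and $f\in C^\infty_c(M,E)$, I would first set
\[
u_1:=\sigma_D(\mathfrak{n}^\flat)^{-1}\bigl(f|_S-D^{\mathrm{tan}}u_0\bigr)\in C^\infty_c(S,E),
\]
which is exactly the normal derivative forced by the equation $Du=f$ along $S$. Then I would apply Theorem~\ref{thm:cauchyglobhyp} to the compactly supported data $(Df,u_0,u_1)$ to obtain the unique $u\in C^\infty(M,E)$ with $Pu=Df$, $u|_S=u_0$, $\nabla_{\mathfrak{n}}u|_S=u_1$, and claim that this $u$ already solves $Du=f$. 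To see this, put $w:=Du-f$; then $Dw=D^2u-Df=Pu-Df=0$ on $M$, the choice of $u_1$ gives $w|_S=\sigma_D(\mathfrak{n}^\flat)u_1+D^{\mathrm{tan}}u_0-f|_S=0$, and the collar decomposition of $D$ then yields $\nabla_{\mathfrak{n}}w|_S=-\sigma_D(\mathfrak{n}^\flat)^{-1}D^{\mathrm{tan}}(w|_S)=0$. Applying $D$ once more, $Pw=0$, so $w$ solves the homogeneous second-order Cauchy problem with vanishing data and Theorem~\ref{thm:cauchyglobhyp} forces $w=0$, i.e.\ $Du=f$. The same computation gives uniqueness in (i): a difference $w$ of two solutions has $Dw=0$ and $w|_S=0$, hence $\nabla_{\mathfrak{n}}w|_S=0$ and $Pw=0$, so $w=0$.

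For (ii) I would observe that the solution map $(f,u_0)\mapsto u$ factors as $(f,u_0)\mapsto(Df,u_0,u_1)\mapsto u$, the first arrow being continuous since it is assembled from differential operators, restriction to $S$, and the bundle isomorphism $\sigma_D(\mathfrak{n}^\flat)^{-1}$, and the second being continuous by Theorem~\ref{thm:cauchyglobhyp}(ii). For (iii), Theorem~\ref{thm:cauchyglobhyp}(iii) applied to $Pu=Df$ gives $\supp(u)\subset J\bigl(\supp(u_0)\cup\supp(u_1)\cup\supp(Df)\bigr)$; since $\supp(Df)\subset\supp(f)$ and $\supp(u_1)\subset\supp(u_0)\cup\bigl(\supp(f)\cap S\bigr)$, the right-hand side is contained in $J(K)$ with $K=\supp(u_0)\cup\supp(f)$.

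I do not expect a serious obstacle here; the only genuinely structural ingredient is the invertibility of $\sigma_D(\mathfrak{n}^\flat)$, which is precisely where the Dirac-type hypothesis (as opposed to merely $P$ being of Laplace type) enters, together with the standard collar splitting of $D$ along $S$. The point that would require the most care is the lower-regularity version indicated after Theorem~\ref{thm:cauchyglobhyp}: there $u|_S$ and $f|_S$ are no longer classical restrictions, and one must first set up the trace theory for the relevant finite-energy spaces before the reduction above can be carried out.
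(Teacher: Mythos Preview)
Your proof is correct and follows the same overall strategy as the paper: reduce to the second-order Cauchy problem for $P=D^2$ using the collar decomposition of $D$ along $S$ and the invertibility of the principal symbol in the normal direction. The one tactical difference lies in the existence step. You solve $Pu=Df$ directly for $u$ with the forced normal derivative $u_1$ and then verify $Du=f$ by a second appeal to uniqueness for $P$ applied to $w=Du-f$. The paper instead solves $D^2v=f$ with $v|_S=0$ and $\nabla_{\mathfrak{n}}v|_S=-\sigma^{-1}u_0$, and then sets $u:=Dv$; with this choice $Du=D^2v=f$ and $u|_S=-\sigma\nabla_{\mathfrak{n}}v+Qv=u_0$ are immediate, so no verification step is needed. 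The paper's variant is slightly slicker (no computation of $u_1$, no auxiliary $w$-argument), while yours has the minor advantage that (ii) and (iii) read off directly for $u$ rather than via $v$. Your uniqueness argument is identical to the paper's.
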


\begin{proof}
Along $S$ we can write $D=-\sigma\nabla_\mathfrak{n} + Q$ where the coefficient field $\sigma$ is invertible because $\mathfrak{n}$ is timelike and hence non-characteristic ($\sigma$ is Clifford multiplication with $\mathfrak{n}$) and $Q$ is a differential operator involving only derivatives tangential to $S$.
Note that if a section $v$ vanishes along $S$ then $Qv$ vanishes along $S$ as well.

To show existence of solutions let $u_0 \in C^\infty_c(S,E)$ and $f\in C^\infty_c(M,E)$.
We apply Theorem~\ref{thm:cauchyglobhyp} with $P=D^2$ and solve $D^2v=f$ on $M$ with $v|_S=0$ and $\nabla_\mathfrak{n}v=-\sigma^{-1}u_0$ along $S$.
Now we put $u:=Dv$.
Then $Du=D^2v=f$ and along $S$ we have
$$
u = Dv = -\sigma\nabla_\mathfrak{n}v + Qv = -\sigma\nabla_\mathfrak{n}v = u_0
$$
as required.

As to uniqueness, let $Du=0$ on $M$ with $u=0$ along $S$.
Then, clearly, $D^2u=0$ on $M$ and along $S$ we have 
$$
\nabla_\mathfrak{n}u = -\sigma^{-1}(Du-Qu) = -\sigma^{-1}(0-0) = 0.
$$
Thus, by Theorem~\ref{thm:cauchyglobhyp}, $u=0$.
This proves assertion (i).

The other two assertions follow for $u=Dv$ directly from the corresponding statements for $v$ in Theorem~\ref{thm:cauchyglobhyp}.
\end{proof}

Again, it is possible to treat data with less regularity.
If $u_0\in H^k_c(S,E)$ and $f$ is locally $L^2$ in time and $H^k$ in space, then the solution $u$ will be continuous in time and $H^k$ in space.

\subsection{Cauchy problem for the Einstein equation}
The vacuum Einstein equation 
\begin{equation}\label{eq:EVE} 
\mathrm{Ric} = 0
\end{equation} 
is the field equation of general relativity. 
Let $S$ be a smooth spacelike Cauchy hypersurface in $M$.
Then the induced Riemannian metric $h$ on $S$ and the second fundamental form $k$ must satisfy the \emph{constraint equations} 
\begin{subequations}\label{eq:constr}
\begin{align} 
\mathrm{scal}_h + (\tr_h k)^2 - |k|^2 ={}& 0, \\ 
\mathrm{div} (k) - d(\tr_h k) ={}& 0.
\end{align} 
\end{subequations} 
A 3-manifold $S$ with fields $(h,k)$ solving the vacuum constraint equations~\eqref{eq:constr} is called a \emph{vacuum Cauchy data set}. 
A vacuum spacetime $(M,g)$ containing a Cauchy surface $S$ with induced data $(h,k)$ is called a \emph{Cauchy development} of $(S,h,k)$. 
The vacuum Cauchy developments of $(S,h,k)$ can be partially ordered by inclusion and, by Zorn's Lemma, there is thus a \emph{maximal} element of the set of Cauchy developments. 

In a suitable gauge \eqref{eq:EVE} becomes a system of non-linear wave equations. 
Constraint and gauge conditions can be shown to propagate. 
This allows one to solve the Cauchy problem for \eqref{eq:EVE} locally. 
The fact that the field equations are covariant, together with the local uniqueness then implies global uniqueness up to isometry. 

\begin{thm}[{Choquet-Bruhat, Geroch \cite[Thm.~3]{1969CMaPh..14..329C}}] Let $(S,h,k)$ be a vacuum Cauchy data set. Then there is a unique, up to isometry, maximal vacuum Cauchy development $(M,g)$ of $(S,h,k)$.  
\end{thm}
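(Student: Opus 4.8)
The plan is to split the statement into its two parts—local existence and uniqueness of a maximal global Cauchy development—and treat them separately, following the Choquet-Bruhat/Geroch strategy. First I would address local existence. The idea is to break the diffeomorphism-covariance of the Einstein equation by fixing a gauge in which \eqref{eq:EVE} becomes a quasilinear system of the normally hyperbolic type covered by Theorem~\ref{thm:cauchyglobhyp}. Concretely, I would use wave coordinates (harmonic gauge), in which the Ricci tensor takes the form $\mathrm{Ric}_{\mu\nu} = -\tfrac12 g^{\alpha\beta}\partial_\alpha\partial_\beta g_{\mu\nu} + \text{(lower order in }g, \partial g)$, so that the reduced equation $\mathrm{Ric}^{\mathrm{(harm)}}_{\mu\nu}=0$ is a system of nonlinear wave equations for the components $g_{\mu\nu}$. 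Prescribing $(h,k)$ on $S$ determines Cauchy data for $g_{\mu\nu}$ and $\partial_t g_{\mu\nu}$ once the gauge is imposed; a standard iteration (or the linear theory of Theorem~\ref{thm:cauchyglobhyp} applied in a Picard scheme, together with energy estimates to control the nonlinearity) yields a solution on a globally hyperbolic neighborhood of $S$.

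Next I would show that a solution of the reduced equation actually solves \eqref{eq:EVE}. This is the \emph{propagation of constraints and gauge} step: the contracted Bianchi identity forces the ``gauge vector'' $H^\mu := g^{\alpha\beta}\Gamma^\mu_{\alpha\beta}$ to satisfy a homogeneous linear wave equation; if $H^\mu$ and its normal derivative vanish on $S$—which is exactly where the constraint equations~\eqref{eq:constr} enter, as they guarantee the initial data are compatible with the full system, not just the reduced one—then by the uniqueness part of Theorem~\ref{thm:cauchyglobhyp} one gets $H^\mu\equiv 0$, hence $\mathrm{Ric}=0$. This gives a local vacuum Cauchy development, and its dependence on the data together with domain-of-dependence arguments (again from Theorem~\ref{thm:cauchyglobhyp}(iii)) gives \emph{local geometric uniqueness}: any two vacuum developments of the same data agree, up to isometry, on a neighborhood of $S$.

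For the global statement I would invoke Zorn's Lemma, as the excerpt already indicates: the set of vacuum Cauchy developments of $(S,h,k)$, partially ordered by isometric inclusion (fixing $S$), has the property that every chain has an upper bound—one takes the union of the spacetimes in the chain, with the metric glued together consistently—so a maximal element $(M,g)$ exists. The remaining, and in my view the main, obstacle is \textbf{uniqueness of the maximal development up to isometry}: a priori Zorn only produces \emph{a} maximal element, not a unique one. Here the argument of Choquet-Bruhat and Geroch is needed: given two maximal developments $(M_1,g_1)$, $(M_2,g_2)$, one considers the collection of isometries between globally hyperbolic subdomains containing $S$ and shows, using local geometric uniqueness together with a connectedness/maximality argument in the product $M_1\times M_2$, that there is a largest such isometry and that its domain and range are all of $M_1$ and $M_2$ respectively. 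The delicate points are verifying that the union in Zorn's argument is again a manifold (Hausdorffness can fail without care) and that the ``largest common isometric subdomain'' is both open and closed in the appropriate sense; this patching is where the real work lies, the PDE input being essentially exhausted by the local existence and uniqueness established via Theorem~\ref{thm:cauchyglobhyp}.
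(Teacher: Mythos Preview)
Your proposal is correct and follows essentially the same approach as the paper. The paper itself gives only a brief sketch preceding the theorem statement (gauge fixing to obtain a nonlinear wave system, propagation of constraints and gauge for local existence, then covariance plus local uniqueness yielding global uniqueness up to isometry, with Zorn's Lemma already flagged in the text for the existence of a maximal element), and your outline expands precisely this strategy with the standard details.
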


%%%%%%%%%%%%%%%%%%%%%%%%%%%%%%%%%%%%%%%%%%%%%%%%%%%%%%%%%%%%%%%%%%%%%%%%%%%%%%%%%%%%%%%%%%%%%%%%%%%%%%%%%%%%%%%%%%%%%%%%%%%%%%%%%%%%%%%%%%%%%

\section{Waves on black-hole spacetimes}

\subsection{Spin geometry in $3+1$ dimensions} \label{sec:spingeom}
An orientable, globally hyperbolic 3+1-dimensional spacetime is spin, so for our purposes, we may assume without loss of generality that $M$ is spin and fix a spin structure $\Pspin\to M$. 
The spin group is $\text{Spin}(1,3) = \text{SL}(2,\Complex)$, the universal covering of the Lorentz group.
It has the inequivalent representations spinor representations $\Complex^2$ and $\bar\Complex^2$. The identification $\Complex \otimes \Reals^4 \equiv \Complex^2 \otimes \bar \Complex^2$ provides a correspondence between tensors and spinors\footnote{Here we use the term spinor to refer to spin-tensors of general valence.}.
%
%The Levi-Civita covariant derivative lifts to a covariant derivative $\nabla_{AA'}$ on spinors, and the Riemann tensor corresponds to the Ricci scalar $\Lambda$, Ricci spinor $\Phi_{ABA'B'}$ and Weyl spinor $\Psi_{ABCD}$. In the following we shall, for simplicity, consider only vacuum spacetimes. 
%
The irreducible representions of $\text{Spin}(1,3)$ are the spaces of symmetric spinors of valence $(k,l)$ where $k,l \in \NatNum_0$. 
We write  $S_{k,l}:=(\Complex^2)^{\odot k}\otimes(\bar\Complex^2)^{\odot l}$ to denote the space of symmetric spinors of valence $(k,l)$. 
Moreover, we use the convention $S_{k,l}=0$ if $k<0$ or $l<0$.

The associated vector bundles will be denoted by 
$$
S_{k,l}M:=\Pspin\times_{\mathrm{Spin}(1,3)}S_{k,l}
$$
and the space of sections of $S_{k,l}M$ by $\mathcal{S}_{k,l}$.

If $k+l$ is even, then sections of $S_{k,l}M$ can be identified with tensor fields. 
Examples are given by vector fields (or $1$-forms), traceless symmetric tensor fields of rank 2, self-dual differential $2$-forms and tensor fields with the symmetries and trace properties of the Weyl tensor.
They correspond to elements of $\mathcal{S}_{k,l}$ with $(k,l) = (1,1), (2,2), (2,0)$ and $(4,0)$, respectively. 

Decomposing spinor and tensor expressions into irreducible pieces yields a powerful tool for simplification and canonicalization. 
The tensor product decomposition 
\begin{equation}\label{eq:tens-prod-decomp}
S_{1,1}\otimes S_{k,l} 
= 
S_{k-1,l-1}\oplus S_{k+1,l-1}\oplus S_{k-1,l+1}\oplus S_{k+1,l+1}
\end{equation} 
yields four \emph{fundamental operators} \cite{ABB:symop:2014CQGra..31m5015A}.
Composing the covariant derivativative $\nabla: C^\infty(M,S_{k,l}M) \to C^\infty(M,T^*M\otimes S_{k,l}M)=C^\infty(M,S_{1,1}M\otimes S_{k,l}M)$ with the corresponding projections gives
\begin{subequations}
\begin{alignat}{3}
\sDiv_{k,l}:{}& \mathcal{S}_{k,l}\rightarrow \mathcal{S}_{k-1,l-1}, \\
\sCurl_{k,l}:{}& \mathcal{S}_{k,l}\rightarrow \mathcal{S}_{k+1,l-1}, \\
\sCurlDagger_{k,l}:{}& \mathcal{S}_{k,l}\rightarrow \mathcal{S}_{k-1,l+1}, \\
\sTwist_{k,l}:{}& \mathcal{S}_{k,l}\rightarrow \mathcal{S}_{k+1,l+1} \, .
\end{alignat}
\end{subequations}
The operators are called the \emph{divergence}, \emph{curl}, \emph{curl-dagger}, and \emph{twistor operators}, respectively. 
Unless otherwise stated, we shall only consider symmetric spinors in the following.

The equations for a massless field of spin $\sfrak \in \half \NatNum$ are  
\begin{subequations} \label{eq:spins-eq}
\begin{align}
\sCurlDagger_{2\sfrak,0} \phi = 0, \quad \phi \in{}& \mathcal{S}_{2\sfrak,0} \quad \text{(left handed fields)} \label{eq:spins-eq-left}, \\ 
\sCurl_{0,2\sfrak} \varphi = 0, \quad \varphi \in{}& \mathcal{S}_{0,2\sfrak} \quad \text{(right handed fields)} \label{eq:spins-eq-right}.
\end{align}
\end{subequations}
For $\sfrak=1/2$ equation \eqref{eq:spins-eq-left} is the Dirac equation for massless left-handed spinors, also known as Dirac-Weyl equation, and in case $\sfrak=1$ \eqref{eq:spins-eq} are the left and right Maxwell equations. The Maxwell equation for a real field strenght $F_{ab}$ is equivalent to either of the equations \eqref{eq:spins-eq}.
% The real Maxwell equation $\nabla^a F_{ab} = 0$, $\nabla_{[a} F_{bc]} = 0$ for a real two form $F_{ab} = F_{[ab]}$ is equivalent to either the right or the left Maxwell equations. 

The irreducible components of the Riemann curvature tensor are the scalar curvature, the traceless Ricci tensor and the Weyl tensor. The Weyl tensor corresponts to the Weyl spinor $\Psi \in \mathcal{S}_{4,0}$.
The vacuum Einstein equations imply that the Weyl spinor satisfies the spin-2, or Bianchi equation\footnote{One conventionally chooses the left-handed Weyl spinor $\Psi \in \mathcal{S}_{4,0}$ to represent the real Weyl tensor.}
\begin{equation}\label{eq:bianchi}
\sCurlDagger_{4,0} \Psi = 0 .
\end{equation} 
For $\sfrak \geq 3/2$, the existence of a non-trivial solution to the spin-$\sfrak$ equation implies  curvature conditions, a fact known as the Buchdahl constraint \cite{Buchdahl58}. 
For example, the spin-2 field on a general background is proportional to the Weyl spinor on the background.

A spinor $\varkappa$ of valence $(k,l)$ satisfying 
\begin{equation}\label{eq:killspin}
\sTwist_{k,l} \varkappa = 0
\end{equation}
is called a \emph{Killing spinor}. Killing spinors of valence $(1,1)$, $(2,0)$ and $(2,2)$ correspond to conformal Killing fields, conformal Killing-Yano 2-forms, and traceless conformal Killing tensors of rank 2, respectively. Recall that a conformal Killing-Yano 2-form is a tensor $Y_{ab} = Y_{[ab]}$, satisfying 
\begin{align}
\label{eq:CKYdefTensorVersion}
\nabla_{(a}Y_{b)c}={}&- \tfrac{1}{3} g_{ab} \nabla_{d}Y_{c}{}^{d}
 + \tfrac{1}{3} g_{(a|c|}\nabla^{d}Y_{b)d}.
\end{align}
Similarly, a Killing tensor is a tensor field $K_{ab \cdots d} = K_{(ab\cdots d)}$ satisfying $\nabla_{(a} K_{bc\cdots d)} = 0$. 

The Petrov classification gives a classification of spacetimes according to the algebraic type of the Weyl spinor. The following result is a consequence of the Bianchi identity. 

\begin{thm}[{Walker, Penrose \cite[Lemma~1]{walker:penrose:1970CMaPh..18..265W}}] \label{thm:Walker-Penrose}
Assume $(M, g)$ is a vacuum spacetime of Petrov type D. Then $(M, g)$ admits a one-dimensional space of Killing spinors of valence $(2,0)$.  
\end{thm}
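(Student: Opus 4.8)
The plan is to produce the Killing spinor explicitly out of the repeated principal spinors of the Weyl spinor and then to check the twistor equation \eqref{eq:killspin} by a frame computation driven by the Goldberg--Sachs theorem and the vacuum Bianchi identity \eqref{eq:bianchi}. First I would put $\Psi$ into Petrov-D normal form: on $M$ there is a smooth local spin dyad $(o_A,\iota_A)$, normalised by $o_A\iota^A = 1$, whose members are the two doubly repeated principal spinors, so that
$$
\Psi_{ABCD} = \Psi_2\, o_{(A}o_B\iota_C\iota_{D)}
$$
for a nowhere-vanishing scalar $\Psi_2$ (if $\Psi_2$ vanished somewhere the Weyl spinor would vanish there and the type would be O). Both $o_{(A}\iota_{B)}$ and $\Psi_2$ are invariant under the residual rescaling $o_A\mapsto\lambda o_A$, $\iota_A\mapsto\lambda^{-1}\iota_A$, so they are intrinsically defined. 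Next I would invoke the Goldberg--Sachs theorem: in a vacuum spacetime the two principal null congruences $o^A\bar o^{A'}$ and $\iota^A\bar\iota^{A'}$ are geodesic and shear-free, i.e.\ the Newman--Penrose coefficients $\kappa,\sigma,\lambda,\nu$ all vanish; and I would write out \eqref{eq:bianchi} in this dyad, where it collapses (all Weyl scalars but $\Psi_2$ being zero) to the four scalar relations $\tho\Psi_2 = 3\rho\Psi_2$, $\edt\Psi_2 = 3\tau\Psi_2$ and their primed counterparts, expressing every directional derivative of $\Psi_2$ through $\Psi_2$ and a single spin coefficient.

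With this preparation I would set
$$
\varkappa_{AB} := \Psi_2^{-1/3}\,o_{(A}\iota_{B)},
$$
a well-defined symmetric valence-$(2,0)$ spinor field, determined up to the choice of cube root and hence up to a constant. Expanding $\nabla_{A'A}\varkappa_{BC}$, the derivatives of $o_A$ and $\iota_A$ introduce spin coefficients while $\nabla(\Psi_2^{-1/3})$ is rewritten via the Bianchi relations above; one then checks that in the totally symmetric part $\nabla_{A'(A}\varkappa_{BC)}$ every surviving term either carries one of the optical coefficients $\kappa,\sigma,\lambda,\nu$ — hence vanishes by Goldberg--Sachs — or is proportional to $\rho,\tau,\pi,\mu$ and cancels in pairs against a Bianchi contribution, so that $\sTwist_{2,0}\varkappa = 0$. (On the Kerr spacetime this $\varkappa_{AB}$ is exactly the Killing spinor underlying the Killing--Yano tensor and the Carter constant.) I expect this verification to be the main obstacle: it is elementary but convention-heavy (metric signature, Newman--Penrose versus Geroch--Held--Penrose formalism, the normalisation of $\sTwist_{2,0}$), and it is the one place where the vacuum hypothesis is genuinely used, since non-vanishing Ricci curvature would spoil both the Goldberg--Sachs conclusion and the reduced form of the Bianchi identity.

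Finally, for one-dimensionality I would use the first integrability condition of the equation $\sTwist_{2,0}\varkappa'=0$: commuting a second covariant derivative through the twistor equation yields the purely algebraic condition $\Psi_{(ABC}{}^{E}\varkappa'_{D)E} = 0$ on any solution $\varkappa'_{AB}$. Writing $\varkappa'_{AB} = a\,o_Ao_B + b\,o_{(A}\iota_{B)} + c\,\iota_A\iota_B$ and substituting the type-D form of $\Psi$, the $b$-term is annihilated while the $a$- and $c$-terms give the linearly independent, nonzero spinors proportional to $o_{(A}o_Bo_C\iota_{D)}$ and $o_{(A}\iota_B\iota_C\iota_{D)}$; hence $a = c = 0$ and $\varkappa'_{AB} = f\,o_{(A}\iota_{B)}$ for some scalar $f$. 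Feeding this back into $\sTwist_{2,0}\varkappa'=0$ and comparing with the computation above forces $f$ to be a constant multiple of $\Psi_2^{-1/3}$. Therefore the valence-$(2,0)$ Killing spinors form a one-dimensional space spanned by $\varkappa_{AB}$, which is the assertion.
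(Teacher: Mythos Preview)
The paper does not give its own proof of this theorem; it merely states the result with a citation to Walker and Penrose and a one-line remark that it is ``a consequence of the Bianchi identity.'' There is therefore nothing in the text to compare your argument against beyond that hint.

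Your proposal is a correct and complete reconstruction of the standard proof, and it is fully consistent with the paper's remark: the Bianchi identity \eqref{eq:bianchi} is used exactly where you say, to express the directional derivatives of $\Psi_2$ in terms of the spin coefficients $\rho,\tau,\mu,\pi$, and this combines with the Goldberg--Sachs vanishing of $\kappa,\sigma,\lambda,\nu$ to make the twistor equation hold for $\varkappa_{AB}=\Psi_2^{-1/3}o_{(A}\iota_{B)}$. This is precisely the Killing spinor the paper later normalises on Kerr so that $\kappa_1=-\tfrac{1}{3}(r-ia\cos\theta)$, consistent with $\kappa_1\propto\Psi_2^{-1/3}$. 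Your uniqueness argument via the algebraic integrability constraint $\Psi_{(ABC}{}^{E}\varkappa'_{D)E}=0$ is also the standard one and correctly forces $a=c=0$ in type D; the residual scalar freedom is then pinned down, up to a constant, by re-inserting into the twistor equation. So your outline is both sound and in line with how Walker--Penrose (and Penrose--Rindler) actually argue, which is presumably what the paper intends by its citation.
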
 
As mentioned above, a Killing spinor of valence $(2,0)$ is equivalent to a real conformal Killing-Yano 2-form. 
The existence of a Killing spinor implies symmetry operators for the massless field equations, as well as conservation laws. By a \emph{symmetry operator}, we mean an operator which takes solutions to solutions. 

Given a Killing spinor $\kappa \in \mathcal{S}_{2,0}$ we may define a complex scalar field $\kappa_1$ by choosing $\kappa_1^2$ to be proportional to the $\mathcal{S}_{0,0}$ term in the expansion of $\kappa \otimes \kappa \in \mathcal{S}_{2,0} \otimes \mathcal{S}_{2,0}$ into irreducible components, analogous to  \eqref{eq:tens-prod-decomp}, and choosing the principal root to define $\kappa_1$. If $(M,g)$ is of Petrov type D, then $\kappa_1$ defined in this manner is non-zero.

Let 
\begin{equation}\label{eq:Udef}
U_a = - \nabla_a \log (\kappa_1) ,
\end{equation}
and define, using the same procedure as discussed above, the extended fundamental operators $\sDiv_{k,l,m,n}, \sCurl_{k,l,m,n}, \sCurlDagger_{k,l,m,n}, \sTwist_{k,l,m,n}$ by projections of $\nabla_{a} + m U_{a} + n \bar U_a$; see \cite[\S 2.2]{2016arXiv160106084A} for details. Here and below, $\bar{\ }$ denotes complex conjugation. 

Finally, given a Killing spinor $\kappa \in \mathcal{S}_{2,0}$ in a Petrov D spacetime, we define for $\varphi \in \mathcal{S}_{k,l}$,  the operators $\varphi \mapsto \mathcal{K}^j_{k,l} \varphi$, $j=0,1,2$ as the $\mathcal{S}_{k+2-2j,l}$ components in the irreducible decomposition of $\kappa_1^{-1} \kappa \otimes \varphi$. Here we restrict to parameters such that $k+2-2j \geq 0$. The operators $\bar{\mathcal{K}}^j_{k,l}$ are defined analogously in terms of projections of $\bar \kappa_1^{-1} \bar \kappa \otimes \varphi$. 

We empasize that symbolic computations have played an essential role in the results on fields with non-zero spin presented here, in particular in sections \ref{sec:struct}, \ref{sec:TMETSI}. The packages \emph{SymManipulator} \cite{Bae11a} and \emph{SpinFrames} \cite{AkBae15}, developed for the Mathematica based symbolic differential geometry suite \emph{xAct} \cite{xAct} makes it possible to systematically exploit decompositions in terms of irreducible representations of the spin group $\text{Spin}(1,3)$, and allows one to carry out investigations that are not feasible by hand.

\subsection{Geometry of black hole spacetimes}
A spacetime $(M,g)$ is \emph{asymptotically flat at null and spatial infinity} if there is a spacetime $(\tilde M, \tilde g)$ which is $C^\infty$ except possibly at a point $i^0$ called \emph{spatial infinity}, and a conformal diffeomorphism $\psi : M \to \psi(M) \subset \tilde M$, 
$$
\tilde g = \Omega^2 \psi^* g, \quad \text{ in $\psi(M)$}
$$
with conformal factor $\Omega$, such that  
$$
\overline{J^+(i^0)} \cup \overline{J^-(i^0)} = \tilde M - M
$$
Here we have, for simplicity, identified $M$ with $\psi(M)$. The boundaries of the causal future and past of $i^0$ are called null infinity and denoted $\Scri^{\pm}$, 
$$
\Scri^{\pm} = \partial J^{\pm}(i^0) \setminus i^0 \, .
$$
We further require that $\Omega = 0$ and $\tilde \nabla \Omega \ne 0$ on $\Scri^{\pm}$; see \cite[\S 11.1]{MR757180} for details. If in addition, there is a neighborhood $\tilde V \subset \tilde M$ of $\overline{M \cap J^-(\Scri^+)}$ which is globally hyperbolic, then $M$ is said to be \emph{strongly asymptotically predictable}. 

Given the notions introduced above, we say that $(M,g)$ is a \emph{black hole spacetime} if it is asympotically flat at null and spatial infinity, strongly asymptotically predicable, and such that
$M \setminus J^{-}(\Scri^+) \ne \emptyset$. In this case, $H = \partial J^-(\Scri^+) \cap M$ is called the \emph{event horizon}, and $O = I^+(\Scri^-) \cap I^-(\Scri^+)$ is called the \emph{domain of outer communication} (DOC).  

An example of these notions is provided by the Schwarzschild spacetime, i.e. the unique static, spherically symmetric black hole spacetime satisfying the Einstein vacuum equations 
$$
\mathrm{Ric} = 0 .
$$
In Schwarzschild coordinates $(t,r,\theta, \phi)$, the Schwarzschild metric takes the form
\begin{equation}\label{eq:g_Schw}
g =  - f dt^2 + f^{-1} dr^2 + r^2 d\Omega^2_{S^2}
\end{equation}
with $f = 1-2\mass/r$. 
Here $d\Omega^2_{S^2} = d\theta^2 + \sin^2\theta d\phi^2$ is the standard metric on the unit 2-sphere. 
An explicit construction of the conformal diffeomorphism $\psi$ and conformal factor $\Omega$ satisfying the above conditions for the Schwarzschild spacetime is provided by the Kruskal-Szekeres extension.
\begin{figure}[!h]
\centering
\begin{tikzpicture}[scale=.7,decoration={snake,amplitude=.5mm,segment length=2mm}]

%horizons
\draw[thick, blue] (-4,4) -- (4,-4);
\draw[thick,blue] (-4,-4) -- (4,4) node[near end,rotate=45,yshift=.2cm]{$r=2\mass$};

%regions

\draw[gray] (4.5,2) node{$\mathrm{I}$};

\draw[gray] (0,2) node{$\mathrm{II}$};

\draw[gray] (-4.5,2) node{$\mathrm{III}$};

\draw[gray] (0,-2.5) node{$\mathrm{IV}$};

%Scri+
\draw[thick, dash dot, red](4,4)--(8,0) node[midway,xshift=.4cm,yshift=.4cm]{$\Scri^+$};
%Scri-
\draw[thick, dash dot, red](4,-4)--(8,0)node[midway,xshift=.4cm,yshift=-.4cm]{$\Scri^-$};

\draw[thick, dash dot, red](-4,4)--(-8,0);
\draw[thick, dash dot, red](-4,-4)--(-8,0);

%Bifurcation sphere

\draw[very thin, black, ->] (-1,.3) node[anchor=east]{$\mathcal{B}$} to[bend right=20] (-.2,0);

\filldraw[fill=black] (0,0) circle[thick,radius=.7mm] ;

%cauchy horizon
\draw[decorate,very thick, red](-4,4)--(4,4);
\draw[decorate,very thick, red](-4,-4)--(4,-4);

\draw[thin] (0,0) .. controls (4,1) .. (8,0);
\draw[thin] (0,0) .. controls (4,0) .. (8,0);
\draw[thin] (0,0) .. controls (4,-1) .. (8,0);
%node[midway,xshift=.5cm,yshift=-.5cm]{$\{t=\text{constant}\}$};

%\filldraw[thick,fill=gray,nearly transparent] (4,-4) .. controls(2,0) .. (4,4) .. controls(3,0) .. (4,-4);

\draw[thick,nearly transparent] (4,-4) .. controls(2,0) .. (4,4) node[near end,xshift=.3cm,rotate=65] {$r=3\mass$};

%\draw[thin,<-]  (3.5,2) to [bend right] (6,5) node[anchor=west]{$r=3M$}; 

\draw[thin,<-]  (4.5,-1) to [bend right] (6,-5) node[anchor=west]{$\{t=\text{constant}\}$};

%pgfmanual \S 16.1 arrows

%i0
\filldraw[fill=white] (8,0) circle[thick,radius=.7mm] node[anchor=west]{$i_0$} ;

%i+
\filldraw[fill=white] (4,4) circle[thick,radius=.7mm] node[anchor=west,yshift=.3cm]{$i_+$};

%i-
\filldraw[fill=white] (4,-4) circle[thick,radius=.7mm] node[anchor=west,yshift=-.3cm]{$i_-$};

\filldraw[fill=white] (-4,4) circle[thick,radius=.7mm] ;

\filldraw[fill=white] (-8,0) circle[thick,radius=.7mm] ;

\filldraw[fill=white] (-4,-4) circle[thick,radius=.7mm] ;

\end{tikzpicture} 
\caption{The maximal extension of the Schwarzschild spacetime.
Region I is the domain of outer communication.}
\label{fig:krusk}

\end{figure}
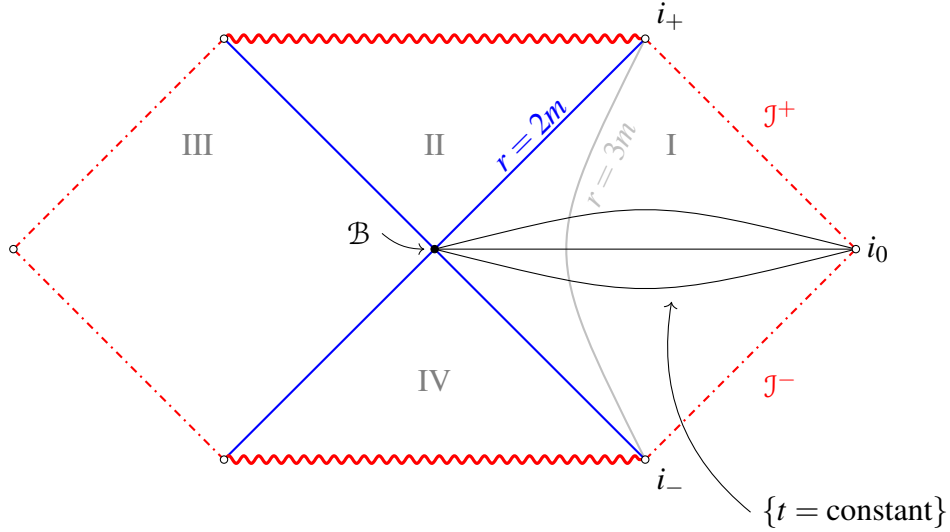
Referring to figure \ref{fig:krusk}, the DOC is represented by region I.  The parameter $\mass$ is the ADM mass of the Schwarzschild black hole. 

\subsection{The Kerr solution}
The Kerr family of vacuum, rotating black hole spacetimes, provides the most important example for our purposes. In Boyer-Lindquist coordinates $(t,r,\theta,\phi)$, the Kerr metric takes the form 
\begin{align}
g \nonumber ={}&{}
\left(-1+\frac{2\mass r}{\Sigma}\right)d t^2 -\frac{4\mass ra\sin^2\theta}{\Sigma}
d t d\phi 
\nonumber\\ {}&{}+
\frac{\Pi\sin^2\theta}{\Sigma} d \phi^2  +\frac{\Sigma}{\Delta}d r^2
 +\Sigma d \theta^2 ,
\label{eq:KerrMetric}
\end{align}
where
\begin{align*}
\Sigma &= r^2 + a^2\cos^2\theta, \\
\Delta &= r^2-2mr+a^2, \\
\Pi    &= (r^2+a^2)^2 -\Delta a^2\sin^2\theta .
\end{align*}

The Kerr spacetime admits two Killing vector fields, the stationary Killing field $\partial_t$ which is timelike near infinity, and the axial $\partial_\phi$. The Kerr family of metrics is parametrized by the mass $\mass$ and the angular momentum per unit mass $a$. The mass and angular momentum $a\mass$ are the ADM momenta corresponding to $\partial_t$ and $\partial_\phi$ respectively. In the subextreme case $|a| < \mass$, the Kerr spacetime contains a non-degenerate black hole\footnote{A stationary black hole is said to be \emph{non-degenerate} if the surface gravity $\upkappa$ of the event horizon is non-zero, where $\upkappa$ is defined by $\upkappa^2 = -\half (\nabla^a \chi^b)(\nabla_a \chi_b)$. Here $\chi^a$ is a null generator of the event horizon.}.

The Kerr metric is algebraically special, of Petrov type D. In particular, a principal null tetrad $\NPl^a, \NPn^a, \NPm^a, \NPmbar^a$ normalized so that $g_{ab} = 2( \NPl_{(a} \NPn_{b)} - \NPm_{(a} \NPmbar_{b)})$ can be chosen, where $\NPl^a$, $\NPn^a$ are repeated principal null directions. 

By Theorem \ref{thm:Walker-Penrose}, the Kerr spacetime admits a Killing spinor $\kappa$ of valence $(2,0)$, or equivalently a conformal Killing-Yano 2-form. Associated with $Y_{ab}$ is the complex $1$-form $\xi = \sCurlDagger \kappa$, or in terms of $Y_{ab}$,
\begin{align}
\xi_{a}= 
{}&
\tfrac{1}{3}i \nabla_{b}Y_{a}{}^{b}
 -  \tfrac{1}{3} \nabla_{b}{*}Y_{a}{}^{b}.
\label{eq:xidefTensorVersion}
\end{align}
In the case of Kerr, or more generally in the \emph{generalized Kerr-NUT class of type D spacetimes}, $Y_{ab}$ can be chosen so that $\nabla^a Y_{ab} = 0$, in which case $Y_{ab}$ is a Killing-Yano 2-form satisfying 
\begin{equation}\label{eq:KY}
\nabla_{(a} Y_{b)c} = 0. 
\end{equation} 
In particular, in this case $\xi_a$ is real. It is convenient to normalize $Y_{ab}$ (or equivalently $\kappa$) so that in Kerr, in terms of Boyer-Lindquist coordinates, $\xi^a = (\partial_t)^a$, and $\kappa_1 = -\tfrac{1}{3}(r-ia\cos\theta)$. 

If \eqref{eq:KY} holds, the square of $Y_{ab}$,  $K_{ab} = Y_{ac} Y^{c}{}_b$ is a symmetric Killing tensor, 
$$
\nabla_{(a} K_{bc)} = 0. 
$$
The Killing tensor $K_{ab}$ is the Carter tensor in the Kerr spacetime. 

In a general spacetime with only two Killing vector fields, the geodesic motion is chaotic. The conserved quantities for geodesics in Kerr associated to $\partial_t$ and $\partial_\phi$ are energy $\GeodesicEnergy = \dot \gamma^a (\partial_t)_a$ and azimuthal angular momentum $\GeodesicLz = \dot \gamma^a (\partial_\phi)_a$. However, the presence of the Carter Killing tensor provides an additional conserved quantity $\KCarter = K_{ab} \dot \gamma^a \dot \gamma^b$ for geodesics in the Kerr spacetime, the Carter constant. Hence, by the Liouville theorem the geodesic equations in Kerr can be integrated by quadratures. Further, as will be discussed below, it provides separability and decoupling properties which allow one to analyze fields on the Kerr spacetime.

%%%%%%%%%%%%%%%%%%%%%%%%%%%%%%%%%%%%%%%%%%%%%%%%%%%%%%%%%%%%%%%%%%%%%%%%%%%%%%%%%%%%%%%%%%%%%%%%%%%%%%%%%%%%%%%%%%%%%%%%%%%%%%%%%%%%%%%%%%%%%

\section{Fields on Kerr spacetime}

%\subsection{Test fields on Kerr}
A necessary prerequisite for solving the black hole stability problem is to be able to prove decay estimates for linear test fields on the Kerr background. The fields of interest for the black hole stability problem are massless test fields of spins up to 2. 

The field equations for massless test fields of integer spin $\sfrak = 0,1,2$ are 
\bigskip

\begin{itemize} \addtolength{\itemsep}{0.1in}
\item[\ ] $(\sfrak=0)$ \quad  Scalar wave equation 
$$
\nabla^a \nabla_a \phi = 0, 
$$
%\item[spin-1/2] Dirac-Weyl $\nabla_{A'}{}^A \phi_A = 0$
\item[\ ] $(\sfrak=1)$ \quad  Maxwell equation 
$$
F_{ab} = F_{[ab]}, \quad \nabla^a F_{ab} = 0, \quad \nabla_{[a} F_{bc]} = 0,
$$
\item[\ ] $(\sfrak=2)$ \quad Linearized gravity 
$$
D\mathrm{Ric}(\delta g) = 0.
$$ 
\end{itemize} 
Here $D\mathrm{Ric}$ is the linearization of the Ricci tensor at the Kerr background. 
Let $g(\lambda)$ be a 1-parameter family of metrics with $g(0)$ the Kerr metric and $\frac{d}{d\lambda} \big{|}_{\lambda=0} g(\lambda)=\delta g$. 
Then $D\mathrm{Ric}$ is defined as 
$$
D\mathrm{Ric}(\delta g) = \frac{d}{d\lambda} \bigg{|}_{\lambda=0} \mathrm{Ric}[g(\lambda)].
$$

In contrast to the scalar field, the field equations of non-zero spin admit bound states, i.e. non-trivial time independent solutions, on the Kerr spacetime. 
We refer to these as non-radiating modes. For the Maxwell field, the non-radiating modes are the time independent Coloumb solutions, and for linearized gravity, the non-radiating modes correspond to deformations within the Kerr family. Thus, for a field of non-zero integer spin, a local integrated energy estimate must be independent of the non-radiating modes, which constitutes a significant additional difficulty compared to the spin-0 case. 

%In this formalism, the Maxwell equation takes the form 
%$$
%\nabla_{A'}{}^B \phi_{AB} = 0
%$$
%and the linearized Bianchi system is 
%$$
%\nabla_{A'}{}^D \phi_{ABCD} = j_{A'BCD}
%$$
%where the source $j_{A'BCD}$ involves linearized connection. 

\subsection{Integrated energy decay}

The essential step needed for a proof of decay estimates for massless fields is to prove dispersion. 
%Dispersion is expressed by a local integrated energy estimates, usually referred to as \mnote{Something missing here.}
By dispersion we mean that the energy density of the field in a local region decays in time. 
Dispersion is expressed by an integrated local energy estimate, or Morawetz estimate, of the form\footnote{We say that $a \lesssim b$ if there is a suitably universal constant $C > 0$ so that $a \leq C b$.}  
$$
\int_{t_0}^{t_1} \int_{R} W \lesssim I
$$
where $W$ dominates the energy density of some order $k$, up to a weight depending on $r$ (a degeneration of $W$ near trapping is acceptable) and $I$ depends only on an energy of order $k$ evaluated on initial data. 
For a non-linear stability result, pointwise decay estimates are essential. 
Once a Morawetz estimate is available, such estimates can be proved using standard techniques. We will here only discuss Morawetz estimates.

Two features of a black hole spacetime are constitute important obstacles to proving such a result, namely trapping and superradiance.  
In a black hole spacetime, there are future directed null geodesics emanating from every point which either cross the horizon or null infinity. By continuity, there must therefore be geodesics which do neither, and hence there are trapped null geodesics which orbit the black hole for all time. In the Schwarzschild spacetime these are located at $r=3\mass$, and in Kerr the location of the trapped null geodesic depends on the conserved quantities $\GeodesicEnergy, \GeodesicLz, \KCarter$. 

In view of the geometric optics approximation,  there are high frequency wave packets near these trapped orbits tend to disperse slowly, and hence trapping is an obstacle to dispersion. 

In the Kerr spacetime, the stationary Killing vector field $\partial_t$ which is timelike near infinity becomes spacelike near the horizon. The region where this happens is called the ergoregion. Due to the existence of the ergoregion, there is no positive definite conserved energy for fields in the Kerr spacetime. One may show that there are wave packets whose energy measured near infinity increases, when scattered off the black hole. 

Finally, the group of isometries of Kerr is only two-dimensional, which is an obstacle to constructing a sufficient number of conserved currents based on Lie-symmetries of the spacetime, which can be used to control fields on Kerr. The existence of the Carter Killing tensor and related conservation laws and symmetry operators (which may be termed \emph{hidden symmetries}) can be used to circumvent the problems caused by the lack of Lie-symmetries. 

As mentioned above, the existence of the Carter Killing tensor is deeply related to separability properties of the field equations. The equations for massless fields on the Kerr spacetime admit a second order symmetry operator related to the Carter Killing tensor, which are not reducible to the first order Lie symmetries $\Lie_{\partial_t}, \Lie_{\partial_\phi}$. 

Define $\OpQ$ by
\begin{equation}\label{eq:OpQdef}
\OpQ = \frac1{\sin\theta}\dtheta\sin\theta\dtheta
+\frac{\cos^2\theta}{\sin^2\theta}\dphi^2 +a^2\sin^2\theta \dt^2 .
\end{equation} 
The Carter symmetry operator which commutes with $\nabla^a \nabla_a$ involves derivatives with respect to both $r,\theta$. 
The operator $\OpQ$ is a symmetry operator for $\nabla^a \nabla_a$ but does not commute; instead $\OpQ$ commutes with $\KSigma \nabla^a \nabla_a$. Here $\KSigma$ may be viewed as a separating factor. 

We denote 
the set of order-$n$ generators of the symmetry algebra 
generated by $\dt$, $\dphi$, and $\OpQ$ by
\begin{align}
\SymGeneraln{n}= \{ \dt^\nt \dphi^\np \OpQ^\nq \mid \nt +\np +2\nq = n; \nt,\np,\nq\in\Naturals \} .
\label{eq:KerrHigherSymmetries}
\end{align}
In particular, 
$$
\SymGeneraln{0}=\{\Id\} , \quad \SymGeneraln{1}=\{\dt,\dphi\} . 
%\SymGeneraln{2}={}&{}\{\dt^2,\dt\dphi,\dphi^2,\OpQ\} .
$$
Of particular importance in our analysis will be the set of second-order symmetry operators, 
\begin{align*}
\SOSym =\{ \dt^2, \dt\dphi, \dphi^2, \OpQ \}=\{\CQA_\ua \} ,
\end{align*}
and 
underlined
%roman 
indices always refer to the index in this set.

A key observation is that one may use generalized vector fields 
$$
A^a = \FF^{a\ua\ub} \partial_a S_{\ua} S_{\ub}, \quad S_{\ua} \in \Symop_2,
$$
to define currents using the formula 
$$
J_a = T_{ab\ua\ub} A^b
$$
where  for a field $\phi$, $T_{ab\ua\ub}$ is defined via a polarization of the standard stress-energy tensor $T_{ab}$, 
$$
T_{ab\ua\ub}[S_{\ua} \phi, S_{\ub} \phi] = \tfrac{1}{4} ( T_{ab}[S_{\ua} \phi + S_{\ub} \phi] - T_{ab}[S_{\ua} \phi - S_{\ub} \phi] \, .
$$
This approach yields a \emph{local spacetime proof} of a Morawetz estimate for the wave equation on slowly rotating Kerr black holes with $|a| \ll \mass$.

Higher-order pointwise norms are defined 
in terms of $\SymGeneraln{n}$
by 
\begin{equation}\label{eq:ptwise} 
\normPtwiseTn{n}{\solu}^2=
\sum_{j=0}^n\sum_{\CQA\in\SymGeneraln{j}} |\CQA\solu|^2. 
\end{equation} 

We now state our main results and briefly compare them with previous results.
In formulating our estimates, we shall make use of the following model energy,
\begin{align*} 
\ModelEnergyThree[\solu](\hst{t})\\
%=\sum_{j=0}^2\sum_{\CQA\in\SymGeneraln{2}}\int_{\hst{t}}{}&{} \left(\frac{(r^2+a^2)^2}{\KDelta}|\dt\CQA\solu|^2 +\KDelta|\dr\CQA\solu|^2+|\dtheta\CQA\solu|^2+\frac{1}{\sin^2\theta}|\dphi\CQA\solu|^2\right)\diThree ,
=\int_{\hst{t}}{}&{} \left(\frac{(r^2+a^2)^2}{\KDelta}|\dt\solu|_2^2 +\KDelta|\dr\solu|_2^2+|\dtheta\solu|_2^2+\frac{1}{\sin^2\theta}|\dphi\solu|_2^2\right)\diThree , 
\end{align*}
where $|\cdot|_2$ is the second-order point-wise norm introduced in \eqref{eq:ptwise} above, and 
$
\diThree=\sin\theta\di r\di\theta\di\phi  
$
is a reference volume element on the Cauchy slice $\hst{t}$. 

To deal with the necessary degeneracy in the Morawetz estimate near trapping, we introduce a cutoff function $\localiseAwayFromPhotonOrbits$ which is supported away from trapping. 

\begin{theorem}[{Andersson, Blue \cite[Thm.~1.2]{AnderssonBlue:KerrWave}}]
\label{Thm:IntroMorawetz}
Let $\solu$ be a solution of the wave equation on the Kerr exterior spacetime, 
$\nabla^a\nabla_a\solu=0$. Then 
%$\gWave \solu = 0$
\begin{align*}
\int_{-\infty}^{\infty} \int_{\rp}^{\infty} \int_{S^2}
 \bigg( \left(\frac{\KDelta^2}{r^4}\right) \normPtwiseTn{2}{\dr \solu}^2 +r^{-2}\normPtwiseTn{2}{\solu}^2  
+ \localiseAwayFromPhotonOrbits \frac{1}{r} \left(\normPtwiseTn{2}{\dt \solu}^2+\normPtwiseTn{2}{\dAng \solu}^2\right) \bigg)
\diFour  \\
%\nonumber\\
\lesssim \ModelEnergyThree(\hst{0}) .
%\label{eq:IntegratedMorawetz} 
\end{align*}
\end{theorem}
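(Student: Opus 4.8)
The plan is to establish the integrated local energy (Morawetz) estimate by combining a conserved-current construction adapted to the Kerr geometry with the hidden symmetry provided by the Carter operator $\OpQ$, and then to commute the whole scheme with the second-order symmetry operators in $\SOSym$ to recover control of the full norm $\normPtwiseTn{2}{\,\cdot\,}$. First I would set up the $1+3$ splitting of Kerr furnished by Theorem~\ref{thm:globhyp}, writing the stress-energy tensor $T_{ab}$ of the scalar field and recalling that, because $\partial_t$ becomes spacelike in the ergoregion, the naive energy current $T_{ab}(\partial_t)^b$ is not coercive. To circumvent superradiance I would work at the level of generalized currents: following the scheme sketched above, take a generalized vector field $A^a = \FF^{a\ua\ub}\partial_a S_{\ua}S_{\ub}$ with $S_{\ua}\in\SOSym$, form the polarized tensor $T_{ab\ua\ub}$, and define $J_a = T_{ab\ua\ub}A^b$. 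The point of inserting the factors $S_{\ua}S_{\ub}$, with $S_{\ua}$ ranging over $\{\dt^2,\dt\dphi,\dphi^2,\OpQ\}$, is precisely that $\OpQ$ commutes with $\KSigma\nabla^a\nabla_a$, so $S_{\ua}\solu$ again satisfies the wave equation up to a harmless $\KSigma$ factor; this is what upgrades a first-order Morawetz estimate to the second-order norm appearing in the theorem and on the right-hand side through $\ModelEnergyThree$.

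Next I would carry out the Morawetz multiplier construction itself. The radial multiplier should be of the form $h(r)\partial_r$ (or $h(r_*)\partial_{r_*}$ in tortoise-type coordinates) with $h$ chosen so that $\frac{dh}{dr_*}$ is positive, $h$ changes sign at the trapped-photon hypersurface, and the zeroth-order term of the deformation tensor is controlled; the separability provided by the Carter tensor is what makes it possible to choose such an $h$ consistently even though the location of trapping depends on $\GeodesicEnergy,\GeodesicLz,\KCarter$. Integrating $\nabla^a J_a$ over the spacetime slab $[-\infty,\infty]\times(\rp,\infty)\times S^2$ and applying the divergence theorem, the boundary terms at $t\to\pm\infty$ are bounded by $\ModelEnergyThree(\hst{0})$ (using energy boundedness, which for slowly rotating Kerr follows from a small perturbation of the Schwarzschild energy, and for the full subextreme range requires the more delicate argument of Andersson--Blue), while the horizon and null-infinity fluxes have a good sign. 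The bulk term then yields exactly the left-hand integrand: the $\KDelta^2/r^4$ weight on $\normPtwiseTn{2}{\dr\solu}^2$ reflects the degeneration of the $\partial_r$ multiplier at the horizon, the $r^{-2}$ weight on $\normPtwiseTn{2}{\solu}^2$ comes from a Hardy inequality applied to the zeroth-order term, and the cutoff $\localiseAwayFromPhotonOrbits$ multiplying $\normPtwiseTn{2}{\dt\solu}^2 + \normPtwiseTn{2}{\dAng\solu}^2$ encodes the unavoidable loss of a derivative at trapping.

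The step I expect to be the main obstacle is controlling the error terms generated by the non-commutativity of $\OpQ$ (and of the angular derivatives) with the radial multiplier, together with the coupling between the $r$- and $\theta$-dependence through $\KSigma = r^2+a^2\cos^2\theta$. Unlike the Schwarzschild case, $\OpQ$ contains the factor $a^2\sin^2\theta\,\dt^2$, so commuting it past $h(r)\partial_r$ produces terms that mix $t$- and $\theta$-derivatives with the wrong weights; these must be absorbed into the main positive terms, and this is only possible when $|a|\ll\mass$ unless one exploits the full strength of the separated structure. A secondary difficulty is that the zeroth-order term of the combined deformation tensor is not manifestly positive near $r=3\mass$, and closing the estimate there requires a Hardy inequality with a carefully tuned potential, valid only after the degeneracy at trapping is built into the multiplier. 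Once these error terms are shown to be controlled by a small multiple of the left-hand side (for $|a|$ small) or by a more intricate frequency-localized argument (for general subextreme $a$), the estimate follows by absorption.
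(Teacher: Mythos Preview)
Your outline follows the approach that the paper sketches in the paragraphs preceding the theorem (the paper itself does not give a proof; it cites \cite{AnderssonBlue:KerrWave}). The generalized-current construction with polarized stress-energy and multipliers indexed by $\SOSym$ is indeed the heart of the argument, and you have identified the right obstructions (ergoregion, trapping, the $\KSigma$-coupling).

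Two points deserve sharpening. First, the theorem as stated is for slowly rotating Kerr, $|a|\ll\mass$; the ``frequency-localized argument for general subextreme $a$'' you allude to at the end is the separate result of Dafermos--Rodnianski--Shlapentokh-Rothman \cite{2014arXiv1402.7034D}, not part of this theorem, so that alternative should be dropped. Second, and more substantively, you describe the scheme as ``prove a Morawetz estimate with a radial multiplier $h(r)\partial_r$ that changes sign at the trapped-photon hypersurface, then commute with $\SOSym$.'' This misrepresents the mechanism. In Kerr there is no single $r$ at which trapping occurs; the trapped set in phase space projects to an interval in $r$, with the precise radius depending on $(\GeodesicEnergy,\GeodesicLz,\KCarter)$. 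The point of the coefficients $\FF^{a\ua\ub}$ is not that one fixes a scalar $h(r)$ and then commutes, but that the family $\FF^{a\ua\ub}(r)$, when contracted against $S_{\ua}S_{\ub}$, produces an \emph{operator-valued} radial multiplier whose zero set tracks the trapping radius as a function of the constants of motion. This is what replaces a genuine pseudodifferential localization by a physical-space construction, and it is why the symmetry operators must enter at the level of the current itself rather than as a subsequent commutation. Your phrasing ``the separability provided by the Carter tensor is what makes it possible to choose such an $h$ consistently'' gestures at this but does not capture it; a single scalar $h$ cannot do the job, and the proof would not close if organized that way.
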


Recently, a Morawetz and pointwise decay estimates have been proved for the scalar field equation on Kerr for the whole subextreme range $|a| < \mass$ has been given, using Fourier techniques and separation of variables \cite{2014arXiv1402.7034D}. The problem of giving a local, spacetime based proof of such a result is open.

For massless fields with non-zero spin, any Morawetz estimate must eliminate the non-radiating modes, in particular the spacetime energy density $W$ must cancel the non-radiating mode. For the spin-1 or Maxwell case, the best result so far is the following decay estimate for a test Maxwell field on a slowly rotating Kerr background. 

To state this result, we define, for a charge-free Maxwell field
with components $\phii$ and $\Upsilon=\Upsilon[\MaxF]=(r-ia\cos\theta)\phi_1$, the following bulk space-time integrals
\begin{subequations}
\begin{align}
\BulkMorawetzF
&=\intBulk{\infty}\frac{\mass\KDelta}{(r^2+a^2)^2} 
%|\phipm|^2 
(|\phi_0|^2 + |\phi_2|^2)  
\diFourNatural ,\\
\BulkFILowerOrder
&=\intBulk{\infty} \frac{\mass|\phi_1|^2}{r^2}\diFourNatural  
=\intBulk{\infty} \frac{\mass}{r^4}|\Upsilon|^2r^2\diFourFI ,\\
\BulkFITwo
&=\intBulk{\infty} \left(\frac{\mass\KDelta^2}{(r^2+a^2)r^2}|\pr\Upsilon|^2 +\localiseAwayFromPhotonOrbits\frac{\mass^2|\pt \Upsilon|^2+|\pAng\Upsilon|^2}{r}\right)\diFourFI, \\
\BulkFIOne
&=\int_{\rp}^{\infty} (1-\localiseAwayFromPhotonOrbits)
\left|\int_{0}^{\FinalTime}\int_{\Stwo} \Im(\solub\pt\Upsilon) \diomega\di t\right| \di r ,
\end{align}
\end{subequations}
where $\diFourNatural$ is the geometrically defined volume form $\KSigma\diomega\di r\di t$, and $\diFourFI$ is the coordinate volume form $\diomega\di r\di t$. For $T<0$, we reverse the sign in these bulk terms, so that they remain nonnegative. The indexing is chosen so that $\BulkMorawetzF$ involves $\phi_0, \phi_2$, and $\BulkFILowerOrder$ involves $\phi_1$ or, equivalently, $\Upsilon$ with no derivatives, $\BulkFIOne$ involves $\Upsilon$ with one derivative in the integral, and $\BulkFITwo$ involves $\Upsilon$ with two derivatives but with a degeneracy near the orbiting null geodesics.

\begin{theorem}[{Andersson, Blue \cite[Thm.~1.3]{andersson:blue:maxwell:2013arXiv1310.2664A}}]\label{thm:Morawetz}
There are positive constants $\epsilonSlowRotationIntro$, $\ConstantIntro$, such
that 
if 
$\epsilonMain\leq\epsilonSlowRotationIntro$, 
$\MaxFTotal$ is a regular solution of the Maxwell equation %\eqref{eq:Maxwell} 
for which $\EnergyCrudeFArg{\MaxFTotal}(0)$ and $\EnergyCrudeFIArg{\soluTotal}(0)$ are finite, 
and the quantities $\BulkMorawetzF$, $\BulkFILowerOrder$, $\BulkFIOne$, and $\BulkFITwo$ are defined in terms of the uncharged part $\MaxF$, 
then $\forall T\in\Reals:$
\begin{align}
\label{eq:BulkTermsDef}
\BulkMorawetzF
+\BulkFILowerOrder
+\BulkFIOne
+\BulkFITwo
&\leq \ConstantIntro\left(\EnergyCrudeFArg{\MaxFTotal}(0) + \EnergyCrudeFIArg{\soluTotal}(0)\right)   .
\end{align}
\end{theorem}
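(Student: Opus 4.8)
We sketch the strategy; a full proof is given in \cite{andersson:blue:maxwell:2013arXiv1310.2664A}. The idea is to reduce the Maxwell system to scalar wave-type equations and to run the vector-field multiplier method on each, using the slow-rotation hypothesis to control the ergoregion and the cutoff $\localiseAwayFromPhotonOrbits$ to absorb the degeneracy at trapping.

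First we decompose the field strength in the principal null tetrad of Kerr into the Newman--Penrose scalars $\phi_0,\phi_1,\phi_2$. The extreme components $\phi_0$ (spin weight $+1$) and $\phi_2$ (spin weight $-1$) satisfy the decoupled spin-weighted Teukolsky equations, while the rescaled middle component $\Upsilon=(r-ia\cos\theta)\phi_1$ satisfies a Fackerell--Ipser equation, i.e.\ a scalar wave equation with a zeroth-order potential; moreover the Newman--Penrose Maxwell equations relate $\phi_0$ and $\phi_2$ to tetrad derivatives of $\phi_1$. We then split $\MaxFTotal=\MaxFStatic+\MaxF$, subtracting the stationary Coulomb solution $\MaxFStatic$ that carries the electric and magnetic charges; this forces the charge-free remainder $\MaxF$ to have a middle scalar with vanishing spherically symmetric part, which is precisely what is needed so that a Hardy/Poincar\'e inequality yields a positive lowest-order spacetime density rather than one vanishing on the non-radiating mode.

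The analytic core is a Morawetz estimate for $\Upsilon$. Here one uses a radial multiplier of the form $F(r)\pr\Upsilon$ together with a carefully chosen zeroth-order term, arranged so that the resulting bulk density is a positive combination of $|\pr\Upsilon|^2$, $|\pt\Upsilon|^2$, $|\pAng\Upsilon|^2$ and $|\Upsilon|^2$ away from the photon orbit. Since $|a|\ll\mass$, the trapped set, whose location is controlled by the Carter constant $\KCarter$ as recalled above, is an $O(|a|/\mass)$ perturbation of $r=3\mass$, so $\localiseAwayFromPhotonOrbits$ isolates exactly the region where the $|\pt\Upsilon|^2$ and $|\pAng\Upsilon|^2$ contributions must be allowed to degenerate; this yields $\BulkFITwo$, and a Hardy inequality upgrades the zeroth-order part to $\BulkFILowerOrder$. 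Near trapping, where a clean positive density is unavailable, one retains only control of the weaker current quantity $\BulkFIOne$. For the extreme components we feed the Newman--Penrose Maxwell relations into an analogous multiplier estimate, or bound $\phi_0,\phi_2$ directly against derivatives of $\Upsilon$, producing the horizon- and infinity-weighted term $\BulkMorawetzF$; the potential and zeroth-order coupling terms in the Fackerell--Ipser equation are then absorbed using the control already obtained on $\BulkMorawetzF$.

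The hard part is the interplay of trapping with superradiance: since $\dt$ becomes spacelike in the ergoregion there is no positive conserved energy, while the Morawetz density is forced to degenerate at trapping, so the standard positivity argument fails on two fronts. We handle this by observing that for $\epsilonMain$ small the $\dt$-flux across a Cauchy slice is coercive up to an error of size $\epsilonMain$, estimating every ergoregion, potential, and spin-coupling contribution by $\epsilonMain$ times the left-hand side of \eqref{eq:BulkTermsDef}, and absorbing these once $\epsilonMain\leq\epsilonSlowRotationIntro$. Summing the resulting estimates over $[0,T]$ (and over $[T,0]$ with time-reversed multipliers when $T<0$), and inserting the assumed finiteness of $\EnergyCrudeFArg{\MaxFTotal}(0)$ and $\EnergyCrudeFIArg{\soluTotal}(0)$ on the initial slice, bounds $\BulkMorawetzF+\BulkFILowerOrder+\BulkFIOne+\BulkFITwo$ by $\ConstantIntro\bigl(\EnergyCrudeFArg{\MaxFTotal}(0)+\EnergyCrudeFIArg{\soluTotal}(0)\bigr)$, which is \eqref{eq:BulkTermsDef}.
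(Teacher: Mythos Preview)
The paper itself gives no proof of this theorem; it is a survey and simply cites \cite{andersson:blue:maxwell:2013arXiv1310.2664A}. The only information the paper provides about the method is a later remark (after Theorem~\ref{thm:MainResultTensorVersion}) that the proof ``relied on a Fourier based Morawetz estimate for the wave equation for the middle component of the Maxwell field.'' Your outline matches this at the structural level: you correctly isolate the Fackerell--Ipser scalar $\Upsilon=(r-ia\cos\theta)\phi_1$, run a Morawetz argument on it, subtract the Coulomb mode so that a Hardy/Poincar\'e inequality applies, and then recover $\phi_0,\phi_2$ from the Newman--Penrose system to produce $\BulkMorawetzF$.

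The one substantive difference is methodological. You describe a purely physical-space radial multiplier $F(r)\partial_r\Upsilon$ with trapping localized by the cutoff $\localiseAwayFromPhotonOrbits$ alone. The paper's description, and the original argument in \cite{andersson:blue:maxwell:2013arXiv1310.2664A}, instead pass through a frequency decomposition: the Fackerell--Ipser equation is separated (or at least Fourier-analyzed in $t,\phi$), and the Morawetz multiplier is chosen to depend on the frequency parameters so that its zero tracks the $(\GeodesicEnergy,\GeodesicLz,\KCarter)$-dependent location of the trapped null geodesics rather than the single surface $r=3\mass$. This is what the paper means by ``Fourier based,'' and it is how the term $\BulkFIOne$---an $r$-integral of a $t,\omega$-integrated current rather than a genuine spacetime density---actually arises. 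Your physical-space sketch is a plausible alternative in the $|a|\ll \mass$ regime (and is closer in spirit to the hidden-symmetry approach of Theorem~\ref{Thm:IntroMorawetz}), but it is not the route the paper attributes to the original proof, and you should not expect it to reproduce the precise form of $\BulkFIOne$ without some frequency localization.
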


Although this type of decay estimate is relatively weak, it is sufficiently robust to have formed the foundation for many further decay results in the study of fields outside black holes. 

\subsection{Structure of massless field equations on Kerr} \label{sec:struct}
%To write some expressions involving spinors, we shall in this section make use of the 2-spinor formalism, see \cite{Penrose:1986fk} for background, see also \cite{2015arXiv150402069A}. 

Let $(M,g)$ be a vacuum spacetime with a valence $(2,0)$ Killing spinor $\kappa$. 
Let $\xi = \sCurlDagger_{2,0} \kappa$, and assume further that $\xi$ is real. This holds in particular in the Kerr spacetime for a suitable normalization of $\kappa$. 

By a \emph{symmetry operator} for a field equation, we mean an operator which takes solutions to solutions. 
As shown by Carter, a rank 2 Killing tensor $K_{ab}$ in a vacuum spacetime provides a \emph{commuting} symmetry operator for the wave equation 
$$
[\nabla_a K^{ab} \nabla_b, \nabla^a \nabla_a] = 0 .
$$
As we shall now discuss, the Maxwell equation on the Kerr spacetime (and more generally on vacuum spacetimes of type D) admits symmetry operators. 

Recall that we defined the extended fundamental spinor operators $\sDiv_{k,l,m,n}$, $\sCurl_{k,l,m,n}$, $\sCurlDagger_{k,l,m,n}$, $\sTwist_{k,l,m,n}$ as well as the operators $\mathcal{K}^j_{k,l}$, $\bar{\mathcal{K}}^j_{k,l}$ in section \ref{sec:spingeom}. 

\begin{definition} 
Define the first-order 1-form linear concomitants $A$ and $B$ by 
\begin{subequations}\label{eq:ABdef} 
\begin{align}
A[\kappa,\phi]={}&\overline{\mathcal{K}}^1_{1,1}\sCurlDagger_{2,0,0,2} \mathcal{K}^1_{2,0}(\kappa_1 \bar{\kappa}_{1'}\phi), \\
B[\kappa,\phi]={}& \sCurlDagger_{2,0,2,0} \mathcal{K}^1_{2,0}\mathcal{K}^1_{2,0}(\kappa_1^2\phi). 
\end{align} 
\end{subequations} 
\end{definition} 
When there is no room for confusion, we suppress the arguments, and write simply $A$ and $B$. 
The following result shows that $A, B$ solves the adjoint Maxwell equations, 
provided $\phi$ solves the Maxwell equation. 

\begin{theorem}[Andersson, B\"ackdahl, Blue \protect{\cite[Prop.~5.3]{2015arXiv150402069A}}] 
\label{lem:ABpot}
Assume that $\kappa$ is a Killing spinor of valence $(2,0)$ and that $\phi$ is a Maxwell field. Then, with $A, B$ given by \eqref{eq:ABdef} 
the spinors $\chi$ and $\omega$, defined by 
\begin{align}
\chi={}&Q \phi+\sCurl_{1,1} A, \label{eq:SymFirstPot} \\
\omega={}&\sCurlDagger_{1,1} B, \label{eq:SymSecondPot} 
\end{align}
are solutions of the left and right Maxwell equations~\eqref{eq:spins-eq}, respectively. 
\end{theorem}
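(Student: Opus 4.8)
\emph{Proof proposal.} Since $\chi\in\mathcal{S}_{2,0}$ and $\omega\in\mathcal{S}_{0,2}$, what must be shown is precisely $\sCurlDagger_{2,0}\chi=0$ and $\sCurl_{0,2}\omega=0$, i.e.\ that $\chi$ solves the left equation \eqref{eq:spins-eq-left} and $\omega$ the right equation \eqref{eq:spins-eq-right} with $\sfrak=1$. Both are local $2$-spinorial identities, so the plan is to verify them by a direct computation in the $2$-spinor calculus --- using abstract indices, or a principal null dyad adapted to the type-D structure --- reducing everything to the fundamental operators $\sDiv,\sCurl,\sCurlDagger,\sTwist$, their $U$-extended variants, the Killing spinor $\kappa$, the scalar $\kappa_1$, and $\phi$.

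First I would collect the ingredients: (i) the commutation relations among $\sDiv,\sCurl,\sCurlDagger,\sTwist$ and among the extended operators $\sDiv_{k,l,m,n},\dots,\sTwist_{k,l,m,n}$, which on a vacuum background involve only the Weyl spinor $\Psi\in\mathcal{S}_{4,0}$ \cite{ABB:symop:2014CQGra..31m5015A,2016arXiv160106084A}; (ii) the Killing spinor equation \eqref{eq:killspin} and its consequences --- that $\xi=\sCurlDagger_{2,0}\kappa$ is a Killing field, the algebraic expression for $U_a=-\nabla_a\log\kappa_1$ in terms of the principal dyad, and the type-D alignment $\Psi_{ABCD}\propto\kappa_1^{-2}\kappa_{(AB}\kappa_{CD)}$ forced by the integrability condition $\Psi_{(ABC}{}^{F}\kappa_{D)F}=0$ (Walker--Penrose, cf.\ Theorem~\ref{thm:Walker-Penrose}); and (iii) the Maxwell equation $\sCurlDagger_{2,0}\phi=0$ and its conjugate. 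The structural reason to expect the result --- and the reason the maps $\mathcal{K}^j_{k,l}$ and the $U$-extended operators were introduced --- is that multiplying by $\kappa$ and rescaling by powers of $\kappa_1$ intertwines $\nabla$ with $\nabla_a+mU_a+n\bar U_a$, and that the Weyl terms produced when one commutes the extended operators on objects of the form $\kappa^{\otimes p}\kappa_1^{q}(\cdots)$ are cancelled exactly by the alignment identity in (ii).

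I would then split the verification into two lemmas. \emph{(a) A rescaling/Teukolsky lemma:} if $\phi$ is a Maxwell field, then the rescaled projections $\mathcal{K}^1_{2,0}(\kappa_1^2\phi)$ and $\mathcal{K}^1_{2,0}(\kappa_1\bar\kappa_{1'}\phi)$ satisfy extended (decoupled) spin-$1$ equations --- the $U$-extended spin-$1$ operators annihilate these rescaled variables exactly as $\sCurlDagger_{2,0}$ annihilates $\phi$ --- which follows by commuting $\mathcal{K}^1_{2,0}$ past $\sCurlDagger$ and invoking \eqref{eq:killspin} and (ii). \emph{(b) A Hertz/Debye-potential lemma:} if $\psi$ satisfies such an extended equation, then $\sCurlDagger_{1,1}\sCurlDagger_{2,0,2,0}\mathcal{K}^1_{2,0}\psi$ is automatically a right Maxwell field, while $\sCurl_{1,1}\overline{\mathcal{K}}^1_{1,1}\sCurlDagger_{2,0,0,2}\psi$ is a left Maxwell field up to an explicit lower-order defect --- again by commutator calculus together with (ii) to kill the Weyl terms. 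Composing (a) and (b) applied to $\phi$ then gives $\omega=\sCurlDagger_{1,1}B$ as a right Maxwell field at once, and gives $\sCurl_{1,1}A$ as a would-be left Maxwell field whose defect is canceled, by design, by the term $Q\phi$: that is, $Q$ is exactly the second-order operator for which $\sCurlDagger_{2,0}(Q\phi)=-\sCurlDagger_{2,0}\sCurl_{1,1}A$ whenever $\phi$ is Maxwell, whence $\sCurlDagger_{2,0}\chi=0$.

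The hard part is not any single step but the size of the terminal cancellation. The concomitant $A$ carries both a $\mathcal{K}$- and a $\overline{\mathcal{K}}$-factor, hence both $\kappa_1$ and $\bar\kappa_{1'}$, so expanding the nested operators by Leibniz, substituting the type-D form of $\Psi$, and repeatedly applying the commutator identities produces many terms that vanish only after full collection, and the correction operator $Q$ must itself be read off from the resulting defect. This is precisely the situation for which the symbolic packages \emph{SymManipulator} and \emph{SpinFrames} over \emph{xAct} were built: the conceptual point --- that the $U$-extended operators are engineered to absorb the type-D curvature --- makes the cancellation expected, but confirming it is a computer-assisted computation rather than a short argument. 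I would expect the right-handed identity for $\omega$ to come out comparatively cleanly, and the left-handed identity for $\chi$ --- with its mixed concomitant and correction term --- to be where essentially all of the labour lies.
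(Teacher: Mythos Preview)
The paper does not prove this theorem. This is a survey, and Theorem~\ref{lem:ABpot} is stated with a bare citation to \cite[Prop.~5.3]{2015arXiv150402069A}; no argument is given here, so there is no ``paper's own proof'' against which to compare your proposal.

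That said, your outline is entirely in the spirit of the methodology the survey advertises for these results. The paper explicitly remarks (end of \S\ref{sec:spingeom}) that the non-zero-spin work in sections~\ref{sec:struct} and~\ref{sec:TMETSI} was carried out by symbolic computation in \emph{SymManipulator}/\emph{SpinFrames} over \emph{xAct}, exploiting irreducible decompositions under $\mathrm{Spin}(1,3)$. Your plan --- reduce everything to commutator identities among the (extended) fundamental operators, use the type-D alignment of $\Psi$ with $\kappa$ (the Walker--Penrose integrability condition) to absorb curvature terms, and confirm the terminal cancellation by machine --- matches the shape of the argument in the cited source, and your expectation that the $\omega$-identity is clean while the $\chi$-identity carries the weight is accurate. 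One caveat worth flagging: the $Q$ appearing in \eqref{eq:SymFirstPot} is \emph{not} the separation operator of \eqref{eq:OpQdef}; it is a distinct second-order concomitant defined in \cite{2015arXiv150402069A}, and the survey simply reuses the letter without redefining it. Your reading of $Q$ as ``exactly the correction that cancels the defect of $\sCurl_{1,1}A$'' is the right one.
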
 

The energy-momentum tensor for the Maxwell field, $T = \phi \bar \phi$, or in terms of the field strength tensor $F_{ab}$, 
$$
T_{ab} = F_{ac} F_{b}{}^c - \frac{1}{4} F_{cd} F^{cd} g_{ab} 
%\phi_{AB} \bar\phi_{A'B'}
$$
is conserved and traceless. Hence, if $\GenVec^a$ is a conformal Killing vector field, $J_a = T_{ab} \GenVec^a$ is conserved. However, in a spacetime with hidden symmetry in the form of a Killing spinor, there are higher order conserved tensors for the Maxwell field. 

If $\GenVec^a$ is a conformal Killing field, then one may define a
first order operator $\varphi \to \hat{\mathcal{L}}_\GenVec \varphi$ 
providing a lift of the Lie-derivative. The operator $\mathcal{L}_\GenVec$ defines a symmetry operator of first order. For the equations of spins $0$ and $1$, the only first order symmetry operators are given by conformal Killing fields.

Consider the operator $F_{ab} \to Z_{ab}$ defined by 
\begin{equation}
Z_{ab}=- 
\tfrac{4}{3} ({*}F)_{[a}{}^{c}Y_{b]c},\label{eq:thetadefTensorVersion}
\end{equation}
This is proportional to $\kappa_1 \mathcal{K}^1_{2,0}$. 

\begin{theorem}[{Andersson, B\"ackdahl, Blue \cite[Thm.~1.1]{2014arXiv1412.2960A}}] 
\label{thm:MainResultTensorVersion} Let $(M, g)$ be a vacuum spacetime of dimension 4, and let $Y_{ab}$ and $F_{ab}$ be real $2$-forms. 
Define the complex
$1$-form $\eta_a$ by
\begin{align}
%Z_{ab}={}&- \tfrac{4}{3} ({*}F)_{[a}{}^{c}Y_{b]c},\label{eq:thetadefTensorVersion}\\
\eta_{a}={}& \nabla_{b}Z_{a}{}^{b}
 + i \nabla_{b}{*}Z_{a}{}^{b},
\label{eq:etadefTensorVersion}
\end{align}
where $Z_{ab}$ is given by \eqref{eq:thetadefTensorVersion},
and the
real symmetric $2$-tensor $V_{ab}$ by 
\begin{align}
V_{ab}={}&\eta_{(a}\bar{\eta}_{b)}
- \tfrac{1}{2} g_{ab} \eta^{c} \bar{\eta}_{c}
-  \tfrac{1}{3} (\mathcal{L}_{Re\xi}F)_{(a}{}^{c}Z_{b)c}
 + \tfrac{1}{12} g_{ab} (\mathcal{L}_{Re\xi}F)^{cd} Z_{cd}\label{eq:VdefTensorVersion}\\
& + \tfrac{1}{3} (\mathcal{L}_{Im\xi}{*}F)_{(a}{}^{c}Z_{b)c}
 -  \tfrac{1}{12} g_{ab} (\mathcal{L}_{Im\xi}{*}F)^{cd}
 Z_{cd},\nonumber
\end{align}
where $\xi_a$ is given by equation
\eqref{eq:xidefTensorVersion}.

If $Y_{ab}$ is a conformal Killing-Yano tensor and $F_{ab}$ satisfies the
Maxwell equations,  
then $V_{ab}$ is conserved, 
$$
\nabla^a V_{ab} = 0.
$$ 
\end{theorem}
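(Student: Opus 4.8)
The plan is to recognize $V_{ab}$ as a tensorial repackaging of a conserved symmetric two-spinor built from the symmetry operators of the Maxwell system, and to reduce the claimed divergence-freeness to an identity in the extended spinor calculus that can be checked on type D backgrounds. First I would translate every tensorial object into spinor language: the Maxwell field $F_{ab}$ corresponds to $\phi \in \mathcal{S}_{2,0}$ via the standard self-dual decomposition, the conformal Killing-Yano tensor $Y_{ab}$ corresponds to the valence-$(2,0)$ Killing spinor $\kappa$ (Theorem~\ref{thm:Walker-Penrose} and the surrounding discussion), and $Z_{ab}$ is, as the statement already notes, proportional to $\kappa_1 \mathcal{K}^1_{2,0}\phi$. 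Hence $\eta_a$, being $\nabla_b Z_a{}^b + i\nabla_b {*}Z_a{}^b$, is (up to a constant) the self-dual part of the divergence of $Z$, i.e.\ a first-order concomitant that the machinery of \cite{2014arXiv1412.2960A,2015arXiv150402069A} expresses through $\sCurlDagger$ and $\sDiv$ applied to $\mathcal{K}^1_{2,0}\phi$. In fact $\eta$ should be closely related — likely equal up to normalization — to the one-form potential $B$ (or a piece of $A$) appearing in Theorem~\ref{lem:ABpot}, whose curl/curl-dagger already produce solutions of the Maxwell equations.

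The second step is to identify the candidate conserved two-spinor. Since the symmetry operators $\chi = Q\phi + \sCurl_{1,1}A$ and $\omega = \sCurlDagger_{1,1}B$ produced in Theorem~\ref{lem:ABpot} are again Maxwell fields (left- and right-handed), the natural conserved quantity is the \emph{mixed} stress-energy-type spinor $T_{ab} \sim \chi_{AB}\bar\omega_{A'B'}$, or more precisely a Hermitian combination of products of the auxiliary solutions and their potentials. The real symmetric tensor $V_{ab}$ in \eqref{eq:VdefTensorVersion} — with its principal term $\eta_{(a}\bar\eta_{b)} - \tfrac12 g_{ab}\eta^c\bar\eta_c$, which is exactly the stress tensor shape of a Maxwell field whose field strength is $\nabla\eta$, plus Lie-derivative correction terms along $\mathrm{Re}\,\xi$ and $\mathrm{Im}\,\xi$ — is then precisely the tensor equivalent of that mixed object. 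So I would verify this correspondence by expanding $V_{ab}$ in spinor components and matching it, term by term, against $\mathrm{Re}(\text{(potential)}\otimes\text{(solution)})$ built from $A,B,\chi,\omega$; the Lie-derivative terms should arise because $\xi = \sCurlDagger\kappa$ is the conformal Killing field attached to $\kappa$, and the lifted Lie derivative $\hat{\mathcal{L}}_\xi$ enters the commutator identities relating $\mathcal{K}^1_{2,0}$ to $\nabla$.

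The third step is the actual conservation computation. With $V_{ab}$ re-expressed spinorially, $\nabla^a V_{ab} = 0$ becomes a statement of the form ``$\sDiv$ of a symmetrized product of two Maxwell solutions vanishes,'' which follows from the Leibniz rule together with (i) the left/right Maxwell equations $\sCurlDagger_{2,0}\chi = 0$, $\sCurl_{0,2}\omega = 0$ satisfied by the auxiliary fields, and (ii) the fundamental-operator commutation relations and the Killing-spinor equation $\sTwist_{2,0}\kappa = 0$, which are what make the Lie-derivative correction terms conspire to cancel the ``anomalous'' pieces coming from differentiating $\kappa_1$ and $U_a = -\nabla_a\log\kappa_1$. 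Concretely, one commutes $\nabla^a$ past the projectors defining $\eta$ and the $\mathcal{K}$-operators, using that on a vacuum type D background the only curvature spinor surviving is $\Psi_2 \propto \kappa_1^{-3}$, so all curvature terms are expressible through $\kappa_1$ and $U_a$ and collapse against the explicit $\mathrm{Re}\,\xi$, $\mathrm{Im}\,\xi$ terms.

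The main obstacle is step three: controlling the curvature terms. When $\nabla^a$ hits $\eta_{(a}\bar\eta_{b)}$ one generates, via the spinor Ricci/Bianchi identities, terms proportional to $\Psi\cdot(\text{lower-order data})$; showing these are exactly cancelled by $\nabla^a$ acting on the $\mathcal{L}_{\mathrm{Re}\,\xi}F$ and $\mathcal{L}_{\mathrm{Im}\,\xi}{*}F$ contractions with $Z$ requires the precise vacuum type D structure and is not a mechanical Leibniz argument — it is where the hypotheses ``vacuum,'' ``$Y$ conformal Killing-Yano,'' and ``$F$ Maxwell'' are all genuinely used. In practice I expect this to be carried out with the \emph{SymManipulator}/\emph{xAct} packages mentioned in Section~\ref{sec:spingeom}, since the number of terms in \eqref{eq:VdefTensorVersion} after differentiation is large; the conceptual content, though, is that $V_{ab}$ is the stress tensor of the second-generation Maxwell field obtained by applying the hidden-symmetry operators of Theorem~\ref{lem:ABpot}, and conserved stress tensors of Maxwell fields are divergence-free.
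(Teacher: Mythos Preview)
Your proposal is in the right neighbourhood but does not match what the paper says about the actual argument. The paper is a survey and does not reproduce a full proof; the only information it gives is the remark immediately following the theorem: ``the fact that $V_{ab}$ is conserved follows from the Teukolsky Master Equation (TME) and the Teukolsky-Starobinsky Identities (TSI), which are integrability conditions implied by the spin-$\sfrak$ field equations.'' Your route is instead to recognise $V_{ab}$ as the stress tensor of the second-generation Maxwell fields $\chi,\omega$ of Theorem~\ref{lem:ABpot} and to deduce $\nabla^a V_{ab}=0$ from the first-order Maxwell systems those auxiliary fields satisfy. Both live in the same hidden-symmetry framework, but they are not the same mechanism: the TME is a decoupled second-order scalar wave equation for the extreme Newman--Penrose components of $\phi$, and the TSI are higher-order differential relations linking those extreme components, whereas you invoke first-order field equations for derived spin-$1$ fields.

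Two concrete points where your identification slips. First, the principal block $\eta_{(a}\bar\eta_{b)} - \tfrac12 g_{ab}\eta^c\bar\eta_c$ is quadratic in a $1$-form, not in a $2$-form, so it is \emph{not} the Maxwell stress-tensor shape; your parenthetical ``field strength is $\nabla\eta$'' does not repair this, since that stress tensor would be quadratic in $d\eta$ rather than in $\eta$. So $V_{ab}$ is not literally the stress tensor of a derived Maxwell field, and your step two is aiming at the wrong target. Second, your tentative match of $\eta$ with the potential $B$ (or a piece of $A$) from Theorem~\ref{lem:ABpot} is off in the $\kappa$-count: $Z$ and hence $\eta$ carry a single factor of $\mathcal{K}^1$, while $A$ and $B$ each carry two (one barred in the case of $A$). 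What actually closes the divergence computation, according to the paper, is the interplay between the $\eta\bar\eta$ block and the $\mathcal{L}_{\mathrm{Re}\,\xi}F$, $\mathcal{L}_{\mathrm{Im}\,\xi}{*}F$ correction terms, and that interplay is governed by the TME/TSI system rather than by an auxiliary Maxwell equation. Your step three correctly flags this cancellation as the crux, but the identities you would need to feed in are TME and TSI, not $\sCurlDagger_{2,0}\chi=0$ and $\sCurl_{0,2}\omega=0$.
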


The leading order part of the conserved tensor $V_{ab}$ satisfies the dominant energy condition, and hence one may use $V_{ab}$ to construct energy currents which are positive definite to leading order. This can be expected to yield an approach to decay estimates for the Maxwell field on the Kerr spacetime which is more systematic than the approach used in the proof of theorem \ref{thm:Morawetz}, which relied on a Fourier based Morawetz estimate for the wave equation for the middle component of the Maxwell field. 
 
An examination of the proof of Theorem~\ref{thm:MainResultTensorVersion} shows that the fact that $V_{ab}$ is conserved follows from the Teukolsky Master Equation (TME) and the Teukolsky-Starobinsky Identities (TSI), which are integrability conditions implied by the spin-$\sfrak$ field equations. 
In view of this fact, a deeper understanding of the TME and TSI systems for the spin-2 or linearized gravity case is fundamental in order to generalize conservation laws of the type exhibited in Theorem~\ref{thm:MainResultTensorVersion} to the spin-2 case. 
We shall now briefly mention recent work which provides the initial step in this direction.

\subsection{TME and TSI for linearized gravity} \label{sec:TMETSI}
Let $\delta g$ be a solution to the linearized vacuum Einstein equation on the Kerr background and let $\kappa \in \mathcal{S}_{2,0}$ be the Killing spinor of valence $(2,0)$. Let $\dot \Psi$ be the linearized Weyl spinor defined with respect to $\delta g$, and let 
$$
\phihat = \mathcal{K}^1_{4,0} \dot \Psi, 
$$
where $\mathcal{K}^1_{4,0}$ was introduced in section \ref{sec:spingeom}. 
Defining $\MidMet \in \mathcal{S}_{2,2}$ by 
\begin{align*}
\MidMet= \sCurlDagger_{3,1} \sCurlDagger_{4,0,4,0}  \phihat , 
\end{align*} 
one finds that the traceless symmetric rank 2 tensor $\MidMet$ is a \emph{complex} solution of the linearized Einstein equation.  
The linearized metric $\MidMet$ is generated using $\phihat$ as a Hertz potential.
\begin{lemma}[Aksteiner, Andersson, B\"ackdahl {\cite[Theorem~1.1]{2016arXiv160106084A}}] 
There is a complex vector field $A$ so that 
\begin{align} \label{eq:MidMetgauge}
\MidMet={}& 
%\tfrac{1}{2} \Psi_2 \kappa_1^3 
%\half \left [ 
\tfrac{\mass}{27}\Lie_{\xi} \delta g
+ 
%\nabla_{(a} A_{b)}
\Lie_A g .
%\right ] 
%(\sTwist_{1,1} A)_{ABA'B'} 
\end{align}
%where $\xi^a = (\partial_t)^a$.
\end{lemma}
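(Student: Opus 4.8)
The plan is to treat the assignment $\delta g\mapsto\MidMet=\sCurlDagger_{3,1}\sCurlDagger_{4,0,4,0}\,\phihat$, with $\phihat=\mathcal{K}^1_{4,0}\dot\Psi$, as a single composite differential operator acting on the traceless symmetric part of the linearized metric, to evaluate it in the two-component spinor calculus on the type~D Kerr background, and to recognize the outcome --- modulo the image of the twistor operator $\sTwist_{1,1}$, i.e. modulo traceless symmetric parts of linearized diffeomorphisms $\Lie_V g$ --- as the first-order operator $\tfrac{\mass}{27}\Lie_\xi$. Since $\dot\Psi$ is second order in $\delta g$ while each of $\sCurlDagger_{4,0,4,0}$, $\sCurlDagger_{3,1}$ adds one derivative and $\mathcal{K}^1_{4,0}$ is purely algebraic, the composite is fourth order, so the real content is that on solutions of the linearized vacuum Einstein equation it telescopes down to first order.

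First I would write $\dot\Psi\in\mathcal{S}_{4,0}$ in terms of $\delta g\in\mathcal{S}_{2,2}$ via the linearized Bianchi system; schematically this is the twice-iterated curl $\sCurl_{3,1}\sCurl_{2,2}\delta g$ together with curvature terms, and on a vacuum type~D background those curvature terms simplify drastically because only the $\Psi_2$ component of the background Weyl spinor survives. Second, I would commute the algebraic factor $\mathcal{K}^1_{4,0}$ --- multiplication by $\kappa_1^{-1}\kappa$ followed by projection onto $\mathcal{S}_{4,0}$ --- past the two curl-daggers that build $\MidMet$. This is exactly the purpose of the extended fundamental operators $\sCurlDagger_{k,l,m,n}$, assembled from $\nabla_a+mU_a+n\bar U_a$ with $U_a=-\nabla_a\log\kappa_1$: each time a power of $\kappa_1$ is moved past a fundamental operator one obtains the corresponding extended operator plus an error proportional to $\sTwist_{2,0}\kappa$, which vanishes because $\kappa$ is a Killing spinor. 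Iterating brings the $\kappa$-contraction onto $\dot\Psi$ at the cost of finitely many controlled lower-order commutator terms.

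Third --- and this is where the work lies --- I would substitute the formula from Step one into the output of Step two and invoke the Teukolsky Master Equation and the Teukolsky--Starobinsky Identities to collapse the fourth-order composite. The TSI are precisely the fourth-order integrability relations satisfied by solutions of the spin-$2$ system, and it is they that force the string of curls, curl-daggers and $\kappa$-weights to telescope; the residual first-order operator is then identified with $\Lie_\xi$ using $\xi=\sCurlDagger_{2,0}\kappa$ together with the fact that the fundamental operators commute with $\Lie_\xi$ because $\xi$ is a Killing field. The coefficient $\tfrac{\mass}{27}$ is pinned down by the normalizations $\kappa_1=-\tfrac{1}{3}(r-ia\cos\theta)$ and $\xi^a=(\partial_t)^a$ fixed earlier. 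Every contribution other than $\tfrac{\mass}{27}\Lie_\xi\delta g$ --- the curvature remainders from Step one, the commutator terms from Step two, and the $\sTwist_{1,1}$-terms (symmetrized covariant derivatives $\nabla_{(a}V_{b)}$) produced by the TSI reduction --- is a linearized diffeomorphism, and summing their generators $V$ yields the vector field $A$; it is genuinely complex even for real $\delta g$, since $\MidMet$ is complex while $\Lie_\xi\delta g$ is real, and absorbing the trace contributions into $A$ makes the identity hold for the $\mathcal{S}_{2,2}$-fields.

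The principal obstacle is Step three: there is no evident shortcut around the large symbolic computation of the fourth-order composite and its reduction via the TSI and TME, which is why the proof in \cite{2016arXiv160106084A} is carried out with the \emph{SymManipulator} and \emph{SpinFrames} packages inside \emph{xAct}. Conceptually, however, the result is forced. The metric $\MidMet$ is the linearized-Einstein solution reconstructed from the Hertz potential $\phihat=\mathcal{K}^1_{4,0}\dot\Psi$, hence obtained from $\delta g$ by a fixed sequence of natural operations involving nothing but the Killing spinor $\kappa$; such a solution can differ from a background-symmetry image of $\delta g$ only by a diffeomorphism, and the symmetry that appears is $\Lie_\xi$ precisely because $\xi$ is the Killing vector manufactured from $\kappa$. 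The lemma is the quantitative form of this observation, together with the determination of the constant $\tfrac{\mass}{27}$.
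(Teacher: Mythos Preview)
The paper itself gives no proof of this lemma; it simply cites \cite[Theorem~1.1]{2016arXiv160106084A}. The logical flow, both here and in the cited reference, runs \emph{from} the lemma \emph{to} the covariant Teukolsky--Starobinsky identity: one applies $(\sCurlDagger)^2$ to both sides of \eqref{eq:MidMetgauge} and obtains the TSI as Corollary~1.4. Your Step~three inverts this flow by invoking the TSI to collapse the fourth-order composite, which is circular within this framework. Even if you mean the classical, coordinate-based TSI of Teukolsky and Starobinsky rather than the covariant version derived here, those identities relate only the gauge-invariant extreme Weyl scalars $\dot\Psi_0$ and $\dot\Psi_4$; they carry no direct information about the gauge-dependent linearized metric $\delta g$, and it is not clear how they could force the difference $\MidMet-\tfrac{\mass}{27}\Lie_\xi\delta g$ to lie in the image of the Killing operator $V\mapsto\Lie_V g$.

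The approach actually taken in \cite{2016arXiv160106084A} is a direct symbolic verification: one expands $\MidMet$ in terms of $\delta g$ using the linearized curvature map, simplifies with the linearized vacuum Einstein equation and the Killing-spinor equation $\sTwist_{2,0}\kappa=0$, and exhibits an explicit $A$ (built from $\delta g$, $\kappa$, and their derivatives) such that the difference equals $\Lie_A g$. Your Steps~one and~two --- writing $\dot\Psi$ in terms of $\delta g$ and commuting the $\kappa$-weighted projection past the curl-daggers via the extended operators --- are accurate ingredients of that computation, and your remark that the calculation is carried out in \emph{xAct} is correct. But the reduction in Step~three must be done without appeal to the TSI; the TSI is the reward, not the tool. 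Your closing heuristic, that two linearized solutions built naturally from $\kappa$ can differ only by gauge plus a symmetry, is suggestive but not a proof: it is exactly the nontrivial content of the lemma that the Hertz-reconstructed metric $\MidMet$ is gauge-equivalent to a mere $\Lie_\xi$-image of $\delta g$ rather than to some genuinely different linearized solution.
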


%\begin{itemize} 
%\item 
The fact that $\MidMet$ is pure gauge apart from the term involving $\Lie_\xi \delta g$ yields, after applying $(\sCurlDagger)^2$ to both sides of \eqref{eq:MidMetgauge}, and recalling that $(\sCurlDagger)^2$ is the complex conjugate of the map to linearized curvature, yields the following result. 
\begin{theorem}[Aksteiner, Andersson, B\"ackdahl {\cite[Cor.~1.4]{2016arXiv160106084A}}]
\begin{align*} 
\sCurlDagger_{1,3}\sCurlDagger_{2,2}\sCurlDagger_{3,1}
\sCurlDagger_{4,0,4,0} \phihat ={}& 
%\half \left [ 
\tfrac{\mass}{27} \Lie_\xi \overline{\phi} 
%\\{}& 
+ \Lie_A \overline{\Psi} .
%\right ] 
% (\sCurlDagger_{1,3} \sCurlDagger_{2,2} \sTwist_{1,1} A)_{A'B'C'D'}
\end{align*}
\end{theorem}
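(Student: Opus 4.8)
The plan is to derive this identity directly from the gauge identity \eqref{eq:MidMetgauge} of the preceding lemma by applying the composition of curl-dagger operators $\sCurlDagger_{1,3}\sCurlDagger_{2,2}\colon\mathcal{S}_{2,2}\to\mathcal{S}_{0,4}$ to both of its sides. Since by construction $\MidMet=\sCurlDagger_{3,1}\sCurlDagger_{4,0,4,0}\phihat$, applying $\sCurlDagger_{1,3}\sCurlDagger_{2,2}$ to the left-hand side of \eqref{eq:MidMetgauge} reproduces verbatim the left-hand side of the asserted identity. Everything then reduces to computing the effect of $\sCurlDagger_{1,3}\sCurlDagger_{2,2}$ on the right-hand side of \eqref{eq:MidMetgauge}, i.e.\ on the metric perturbations $\Lie_\xi\delta g$ and $\Lie_A g$; note that this composition only sees the trace-free part of a symmetric $2$-tensor, which is precisely what lives in $\mathcal{S}_{2,2}$.

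For this I would use the two structural facts emphasized in the text. First, on a (trace-free) symmetric rank-$2$ tensor regarded as a metric perturbation $h$, the operator $\sCurlDagger_{1,3}\sCurlDagger_{2,2}$ computes the complex conjugate $\overline{\dot\Psi[h]}$ of the linearized Weyl spinor of $h$ — this is exactly the statement that $(\sCurlDagger)^2$ is the complex conjugate of the map to linearized curvature. Taking $h=\delta g$ gives $\sCurlDagger_{1,3}\sCurlDagger_{2,2}\,\delta g=\overline{\dot\Psi}$. Second, for a pure-gauge perturbation $h=\Lie_A g$ the linearized Weyl spinor is simply the Lie-drag of the background Weyl spinor, so $\sCurlDagger_{1,3}\sCurlDagger_{2,2}\Lie_A g=\overline{\Lie_A\Psi}=\Lie_A\overline{\Psi}$; here one uses that $\sCurlDagger_{1,3}\sCurlDagger_{2,2}$ is $\Complex$-linear, so that the identity, valid for real generators, extends to the complex vector field $A$ furnished by the lemma, together with the fact that $A$ is determined by $\Lie_A g$ only up to background Killing fields, which preserve $\Psi$, so that $\Lie_A\overline{\Psi}$ is well defined.

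It remains to treat the term $\tfrac{\mass}{27}\Lie_\xi\delta g$. Here I would invoke that $\xi$ is a Killing vector field of Kerr — indeed $\xi=\partial_t$ in Boyer-Lindquist coordinates for the normalization $\xi=\sCurlDagger\kappa$ used above — so $\Lie_\xi$ annihilates the background metric and, with that normalization, the Killing spinor $\kappa$, hence also $\kappa_1$ and $U_a=-\nabla_a\log\kappa_1$. Consequently $\Lie_\xi$ commutes with the natural differential operator $\sCurlDagger_{1,3}\sCurlDagger_{2,2}$, giving $\sCurlDagger_{1,3}\sCurlDagger_{2,2}\bigl(\tfrac{\mass}{27}\Lie_\xi\delta g\bigr)=\tfrac{\mass}{27}\Lie_\xi\overline{\dot\Psi}$. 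Writing $\phi$ for $\dot\Psi$ and adding the three contributions yields
\[
\sCurlDagger_{1,3}\sCurlDagger_{2,2}\sCurlDagger_{3,1}\sCurlDagger_{4,0,4,0}\phihat=\tfrac{\mass}{27}\Lie_\xi\overline{\phi}+\Lie_A\overline{\Psi},
\]
which is the claim.

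The genuinely nontrivial input is the lemma itself, together with the first structural fact: that the fourfold curl-dagger composition, including its extended first factor $\sCurlDagger_{4,0,4,0}$ built from $\nabla_a+4U_a$, reduces on metric perturbations to (the conjugate of) the linearized curvature map. This is where the Hertz-potential construction and the fundamental-operator calculus of section~\ref{sec:spingeom} carry the load. Granting those, the present corollary is essentially immediate; the only remaining point requiring care is the commutation $[\Lie_\xi,\sCurlDagger_{1,3}\sCurlDagger_{2,2}]=0$, which hinges on $\Lie_\xi\kappa_1=0$ in Kerr with the chosen normalization.
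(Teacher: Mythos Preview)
Your proposal is correct and follows essentially the same route as the paper: apply $\sCurlDagger_{1,3}\sCurlDagger_{2,2}$ to both sides of \eqref{eq:MidMetgauge} and use that $(\sCurlDagger)^2$ on $\mathcal{S}_{2,2}$ is the complex conjugate of the map to linearized curvature. You have simply spelled out in more detail the handling of the two right-hand terms (commuting $\Lie_\xi$ through for the Killing field, and recognizing the pure-gauge term as $\Lie_A\overline{\Psi}$), which the paper leaves implicit.
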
 
%\item 
%Schematically
%$$
%\sCurlDagger \sCurlDagger \sCurlDagger \sCurlDagger_4 \phihat = \Psi_2 \kappa_1^3 \Lie_\xi \overline{\phi} +  \text{``pure gauge''} 
%$$
%\item 
This is the full, covariant form of the TSI for linearized gravity. 
%\end{itemize} 

%%%%%%%%%%%%%%%%%%%%%%%%%%%%%%%%%%%%%%%%%%%%%%%%%%%%%%%%%%%%%%%%%%%%%%%%%%%%%%%%%%%%%%%%%%%%%%%%%%%%%%%%%%%%%%%%%%%%%%%%%%%%%%%%%%%%%%%%%%%%%

\section{Index theorem for Dirac operators on Lorentzian manifolds}

Index theory is a huge and well-developed field in the Riemannian setting where Dirac operators are elliptic.
That the hyperbolic Dirac operator on a Lorentzian spin manifold can also be Fredholm under suitable assumptions and hence possess an index is a rather recent insight. 
In fact, applications in physics demand such an index formula; in the past physicists have often resorted to a so-called Wick rotation (in most cases, a heuristic argument at best) in order to apply Riemannian index theorems in a Lorentzian setting.
This can now be avoided; we will come back to an application in quantum field theory at the end of this section. 

To describe the setup let $M$ be a globally hyperbolic manifold whose Cauchy hypersurfaces are compact.
In other words, $M$ is spatially compact. 
Let $M$ carry a spin structure so that the spinor bundle $\Sigma M\to M$ and the classical Dirac operator acting on sections of $\Sigma M$ are defined.
Furthermore, we assume that $n=\dim(M)$ is even.
In this case the spinor bundle splits into subbundles of left-handed and right-handed spinors, $\Sigma M = \Sigma^LM \oplus \Sigma^RM$.
In $4$ dimensions, $\Sigma^LM=S_{1,0}M$ and  $\Sigma^RM=S_{0,1}M$.
The Dirac operator interchanges these two subbundles, i.e.\ with respect to the splitting  $\Sigma M = \Sigma^LM \oplus \Sigma^RM$ it takes the form 
$$
 {D} = \left( \begin{matrix} 0 &  D^- \\ D^+ & 0 \end{matrix} \right) \, .
$$
If $S\subset M$ is a smooth spacelike hypersurface then along $S$ we can write
$$
D = -\sigma\cdot\left(\nabla_\mathfrak{n} + iA -\frac{n-1}{2}H\right)
$$
where $\sigma$ is an invertible endomorphism field, $\mathfrak{n}$ is the past-directed unit normal field along $S$, $A$ is the Riemannian Dirac operator on $S$ and $H$ is the mean curvature of $S$.

Since $A$ is a self-adjoint elliptic differential operator on $S$ it has discrete real spectrum with all eigensections being smooth.
In particular, for any interval $I\subset \R$ we have the $L^2$-orthogonal projections $P_I(A)$ onto the sum of all eigenspaces of $A$ to the eigenvalues in $I$.

Now we assume that $M$ is a manifold with boundary, where the boundary is the disjoint union of two smooth spacelike Cauchy hypersurfaces $S_1$ and $S_2$.
Let $S_1$ lie in the past of $S_2$.
Denote the corresponding Riemannian Dirac operators by $A_1$ and $A_2$, respectively.
Now we can formulate the \emph{Atiyah-Patodi-Singer} boundary conditions.
A sufficiently regular left-handed spinor field $u$ on $M$ is said to satisfy the APS boundary conditions if $P_{[0,\infty)}(A_1)(u|_{S_1})=0$ and $P_{(-\infty,0]}(A_2)(u|_{S_2})=0$.

By $\FEAPS(M,\Sigma^LM)$ we denote the space of all left-handed spinor fields $u$ on $M$ which are continuous in time, $L^2$ in space, satisfy the APS boundary conditions, and are such that $Du$ is $L^2$.
This space of ``finite-energy sections'' naturally forms a Banach space.

In the same manner, we can define the space $\FEAPS(M,\Sigma^LM\otimes F)$ of twisted finite-energy spinors satisfying the APS conditions where $F$ is a Hermitian vector bundle equipped with a metric connection.

\begin{thm}[{B\"ar, Strohmaier \cite[Main thm.]{baer2015index}}]\label{thm:index}
Let $(M,g)$ be a compact globally hyperbolic Lorentzian manifold with boundary $\partial M = S_1 \sqcup S_2$. 
Here $S_1$ and $S_2$ are smooth spacelike Cauchy hypersurfaces, with $S_2$ lying in the future of $S_1$.
Assume that $M$ is even dimensional and comes equipped with a spin structure.
Let $F$ be a Hermitian vector bundle over $M$ equipped with a metric connection.

Then the twisted Dirac operator 
$$
D_\APS: \FEAPS(M;\Sigma^LM\otimes F) \to  L^2(M;\Sigma^RM\otimes F)
$$ 
under Atiyah-Patodi-Singer boundary conditions is Fredholm and its index is given by
\begin{align}
\mathrm{ind}[D_\APS]
=& 
\int_M \Adach (g)\wedge \mathrm{ch}(F) + \int_{\partial M}T(\Adach(g)\wedge \mathrm{ch}(F)) \notag\\
&- \frac{h(A_{1})+h(A_{2})+\eta(A_{1})-\eta(A_{2})}{2}\, .
\label{eq:indexformel}
\end{align}
\end{thm}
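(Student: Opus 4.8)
The plan is to split the proof into two essentially independent parts: (A) showing that $D_\APS$ is Fredholm, and (B) evaluating the index. For (A) I would exploit well-posedness of the Cauchy problem for Dirac-type operators (Theorem~\ref{thm:cauchyglobhypDirac}, applied to the full Dirac operator $D=\left(\begin{smallmatrix}0&D^-\\D^+&0\end{smallmatrix}\right)$, whose square is normally hyperbolic and whose principal symbol is invertible on the spacelike covectors conormal to $S_1$ and $S_2$). A finite-energy solution of $D^+u=0$ is then determined by its restriction to $S_1$, so the Cauchy evolution furnishes a topological isomorphism $\mathcal{E}\colon L^2(S_1;\Sigma^LM\otimes F)\to L^2(S_2;\Sigma^LM\otimes F)$, $u|_{S_1}\mapsto u|_{S_2}$. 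The APS conditions then become spectral constraints: $u\in\ker D_\APS$ iff $v:=u|_{S_1}\in\operatorname{ran}P_{(-\infty,0)}(A_1)$ and $\mathcal{E}v\in\operatorname{ran}P_{(0,\infty)}(A_2)$, whence
\[
\ker D_\APS\;\cong\;\ker\!\bigl(P_{(-\infty,0]}(A_2)\,\mathcal{E}\,\big|_{\operatorname{ran}P_{(-\infty,0)}(A_1)}\bigr),
\]
and an analogous argument (integrating by parts and running the evolution backwards for $D^-$ with the complementary boundary conditions) identifies $\operatorname{coker}D_\APS$ with the cokernel of the same operator. Thus $D_\APS$ is Fredholm as soon as $T:=P_{(-\infty,0]}(A_2)\,\mathcal{E}\colon\operatorname{ran}P_{(-\infty,0)}(A_1)\to\operatorname{ran}P_{(-\infty,0]}(A_2)$ is, and --- since $\mathcal{E}$ is invertible --- this reduces to showing that the projector $\mathcal{E}\,P_{(-\infty,0)}(A_1)\,\mathcal{E}^{-1}$ differs from $P_{(-\infty,0)}(A_2)$ by a \emph{compact} operator on $L^2(S_2)$; the finitely many zero modes of $A_1,A_2$ contribute only finite-dimensional corrections.

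This compactness statement is the analytic heart of (A), and it is where the Lorentzian structure is indispensable. The evolution $\mathcal{E}$ is a Fourier integral operator whose canonical relation is the graph of the bicharacteristic (null-geodesic) flow of $g$ carried from $S_1$ to $S_2$ --- the classical Hörmander description of the solution operator of a hyperbolic Cauchy problem, which uses global hyperbolicity and the global splitting of Theorem~\ref{thm:globhyp}. Consequently $\mathcal{E}\,P_{(-\infty,0)}(A_1)\,\mathcal{E}^{-1}$ is a pseudodifferential operator of order $0$ whose principal symbol is the push-forward under the flow of the symbol of $P_{(-\infty,0)}(A_1)$, i.e.\ of the indicator of the past component of the characteristic cone. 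Because the bicharacteristic flow preserves the time-orientation, hence the splitting of the light cone into its future and past sheets --- and because $S_1,S_2$ are spacelike, so that the sign of a covector's frequency on $S_i$ is exactly which sheet it lies on --- this push-forward agrees on the characteristic set with the symbol of $P_{(-\infty,0)}(A_2)$. Off the characteristic set the two projectors differ by a smoothing operator by elliptic regularity; on it, the agreement of principal symbols together with propagation of singularities upgrades the difference to a compact operator, which establishes (A).

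For (B) I would first invoke homotopy invariance of the Fredholm index under deformations of $g$ that keep $D_\APS$ Fredholm. Deforming to a metric that is a product $-dt^2+h_i$ in a collar of each boundary component leaves $A_1,A_2$ (hence $\eta(A_i)$ and $h(A_i)$) unchanged, while it changes $\int_M\Adach(g)\wedge\mathrm{ch}(F)$ by exactly the transgression integral $\int_{\partial M}T(\Adach(g)\wedge\mathrm{ch}(F))$, which is therefore the correction appearing in \eqref{eq:indexformel}. In the product-near-boundary situation it remains to prove
\[
\operatorname{ind}D_\APS=\int_M\Adach(g)\wedge\mathrm{ch}(F)-\tfrac12\bigl(h(A_1)+h(A_2)+\eta(A_1)-\eta(A_2)\bigr).
\]
On each collar, separating variables in the eigenbasis of $A_i$ reduces the hyperbolic APS problem to a mode-by-mode count which is formally identical to the one produced by the \emph{Riemannian} APS boundary value problem for the elliptic operator $D^+$ on the associated Riemannian metric $N^2dt^2+g_t$ --- a rigorous substitute for the heuristic Wick rotation: in either signature an eigenmode of $A_i$ is compatible with the boundary conditions precisely when its eigenvalue has a prescribed sign. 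Hence $\operatorname{ind}D_\APS$ equals this Riemannian APS index, and the classical Atiyah--Patodi--Singer index theorem supplies the bulk integral and the spectral correction, with $\eta(A_1)$ entering with the opposite sign to $\eta(A_2)$ because the incoming boundary $S_1$ carries the opposite induced orientation, and both $h(A_i)$ entering because both sets of zero modes are excluded by the boundary conditions. Combining with the transgression term yields \eqref{eq:indexformel}.

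I expect the main obstacle to be the compactness assertion in part (A): controlling $\mathcal{E}\,P_{(-\infty,0)}(A_1)\,\mathcal{E}^{-1}-P_{(-\infty,0)}(A_2)$ demands the full Fourier-integral-operator calculus and careful wavefront-set bookkeeping for the evolution operator, and it is the one place where the Lorentzian signature --- a two-sheeted light cone and a genuine global time-orientation --- is used essentially rather than cosmetically. A secondary difficulty in (B) is the precise transgression bookkeeping under the metric deformation together with the verification that the boundary mode count is signature-independent.
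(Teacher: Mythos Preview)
The paper does not prove this theorem: it is a survey, and Theorem~\ref{thm:index} is quoted verbatim from \cite{baer2015index} with no argument given here beyond the subsequent remarks on regularity and the $\ker D_\aAPS$ reformulation. So there is no ``paper's own proof'' to compare against.

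That said, your outline is a faithful sketch of the actual B\"ar--Strohmaier argument. The reduction of the Fredholm property to compactness of $\mathcal{E}\,P_{(-\infty,0)}(A_1)\,\mathcal{E}^{-1}-P_{(-\infty,0)}(A_2)$, established via the Duistermaat--H\"ormander FIO description of the Cauchy evolution and the observation that the null bicharacteristic flow preserves the future/past splitting of the light cone, is exactly the mechanism used in \cite{baer2015index}. Likewise the index computation there proceeds by deforming to a product collar and invoking the Riemannian APS theorem, with the transgression term absorbing the change in the bulk integral. Two small points to tighten if you write this out in full: first, the identification of $\operatorname{coker}D_\APS$ requires a genuine adjoint computation in the finite-energy setting (not just ``running the evolution backwards''), and one must check that the $L^2$-pairing on $M$ interacts correctly with the boundary traces; second, in part~(B) the claim that the hyperbolic and elliptic mode counts agree is not quite a mode-by-mode statement---the Riemannian APS index is computed globally, not collar-wise---so what one actually shows is that the Lorentzian index equals the index of an auxiliary elliptic APS problem on an associated Riemannian manifold, and then quotes the classical theorem. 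Neither point is a gap in your strategy, but both need care in execution.
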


The right hand side in the index formula is formally exactly the same as in the original Riemannian Atiyah-Patodi-Singer index theorem.
Here $\Adach(g)$ is the $\Adach$-form manufactured from the curvature of the Levi-Civita connection of the Lorentzian manifold, $\mathrm{ch}(F)$ is the Chern character form of the curvature of $F$, $T(\Adach(g)\wedge\mathrm{ch}(F))$ is the corresponding transgression form which also depends on the second fundamental form of the boundary.
Moreover, $h$ denotes the dimension of the kernel, and $\eta$ the $\eta$-invariant of the corresponding operator.

The Dirac operator on a Lorentzian manifold is far from hypoelliptic; 
solutions of the Dirac equation $Du=0$ can in general have very low regularity.
Theorem~3.5 in \cite{baer2015index} tells us that under Atiyah-Patodi-Singer boundary conditions this is no longer so.
Solutions of $Du=0$ which satisfy the APS boundary condtions are always smooth as if we had elliptic regularity at our disposal.
Moreover,
$$
\mathrm{ind}[D_\APS] = \dim \ker\big[D_\APS|_{C^\infty(M;\Sigma^LM\otimes F)}\big] -  \dim \ker\big[D_\aAPS|_{C^\infty(M;\Sigma^LM\otimes F)}\big]\, .
$$ 
Here $D_\aAPS$ stands for the Dirac operator subject to \emph{anti-Atiyah-Patodi-Singer} boundary conditions, the conditions complementary to the APS conditions.
The occurrence of the aAPS boundary conditions and the fact that $D_\aAPS$ again maps sections of $\Sigma^LM\otimes F$ to those of $\Sigma^RM\otimes F$, and not in the reverse direction, are different from the corresponding formula in the Riemannian setting.
In the elliptic case, the Dirac operator with anti-APS boundary conditions will in general have infinite-dimensional kernel and thus not be Fredholm.
In contrast, in the Lorentzian situation, anti-APS boundary conditions work equally well as the APS conditions.

In the remainder of this section we sketch an application of Theorem~\ref{thm:index} in the context of algebraic quantum field theory (QFT) on curved spacetimes.
Details can be found in \cite{baer2015rigorous}.
The so-called \emph{$2$-point functions} are objects of central importance in such a QFT.
They are distributional bi-solutions of the Dirac equation on $M\times M$.
A particularly important class of $2$-point functions is determined by the \emph{Hadamard condition} which specifies the singular structure of these distributions.
Hence the difference $\omega_1-\omega_2$ of two Hadamard $2$-point functions $\omega_1$ and $\omega_2$ is a smooth bi-solution of the Dirac equation.
In this case we can associate a smooth $1$-form $J^{\omega_1,\omega_2}$, the \emph{relative current}.
The point here is that the definition of an absolute current $J^{\omega_i}$ would require a regularization procedure due to the singular nature of $\omega_i$ but the relative version is unambiguously defined and smooth.

A computation shows that $J^{\omega_1,\omega_2}$ is coclosed, $\delta J^{\omega_1,\omega_2}=0$.
Now if $S\subset M$ is a smooth spacelike Cauchy hypersurface with future-directed unit normal field $\mathfrak{n}$ then we can define the \emph{relative charge} by $Q^{\omega_1,\omega_2} = \int_S J^{\omega_1,\omega_2}(\mathfrak{n}) \, dS$.
Since $J^{\omega_1,\omega_2}$ is coclosed the divergence theorem implies that the definition of $Q^{\omega_1,\omega_2}$ is independent of the choice of $S$.

The Cauchy hypersurface $S$ in the above definition of the relative charge was just an auxiliary tool.
But using a Fock space construction we can also associate a $2$-point function $\omega_S$ to any smooth spacelike Cauchy hypersurface $S$.
This $\omega_S$ is to be thought of as the vacuum expectation value for an observer with spatial universe $S$.
In case the metric of $M$ and the connection of $F$ are of product form near $S$ it is known that $\omega_S$ is of Hadamard form. 
Thus we can define the relative charge $Q^{\omega_{S_1},\omega_{S_2}}$ for the two boundary parts if the metric of $M$ and the connection of $F$ are of product form near $\partial M$ which we now assume.

Working with left-handed spinors the main result in \cite{baer2015rigorous} relates $Q^{\omega_{S_1},\omega_{S_2}}$ to the index of the Dirac operator in Theorem~\ref{thm:index} and we obtain
\begin{equation}
Q_L^{\omega_{S_1},\omega_{S_2}}
:= 
Q^{\omega_{S_1},\omega_{S_2}}
= 
\int_M \Adach(g)\wedge\mathrm{ch}(F) 
-\frac{h(A_1) - h(A_2)+\eta(A_1)-\eta(A_2)}{2} \, .
\label{eq:QL}
\end{equation}
The product assumption on the metric of $M$ and connection of $F$ near the boundary implies that the transgression form vanishes near the boundary so that the boundary integral in \eqref{eq:indexformel} vanishes.
The opposite sign in front of $h(A_2)$ in equations \eqref{eq:indexformel} and \eqref{eq:QL} is not a misprint but due to the convention on how the eigenvalue $0$ is treated in the APS conditions.

Similarly, interchanging the roles of left-handed and right-handed spinors we obtain the ``right-handed'' relative charge
\begin{equation}
Q_R^{\omega_{S_1},\omega_{S_2}}
= 
-\int_M \Adach(g)\wedge\mathrm{ch}(F) 
+\frac{h(A_1) - h(A_2)+\eta(A_1)-\eta(A_2)}{2} \, .
\label{eq:QR}
\end{equation}
Hence the \emph{total relative charge} vanishes, $Q^{\omega_{S_1},\omega_{S_2}} := Q_R^{\omega_{S_1},\omega_{S_2}} + Q_L^{\omega_{S_1},\omega_{S_2}} = 0$, while the \emph{chiral relative charge} does not in general, 
\begin{align*}
Q_\mathrm{chir}^{\omega_{S_1},\omega_{S_2}} 
:=&
\,\, Q_R^{\omega_{S_1},\omega_{S_2}} - Q_L^{\omega_{S_1},\omega_{S_2}} \\
=&
-2\int_M \Adach(g)\wedge\mathrm{ch}(F) + h(A_1) - h(A_2)+\eta(A_1)-\eta(A_2) \, .
\end{align*}
Thus observers in $S_1$ and later in $S_2$ will disagree on the difference of numbers of left-handed and right-handed fermions.
Such quantities which are classically preserved but whose quantum counterparts are not are called \emph{anomalies}.
The \emph{chiral anomaly} treated here is a prominent example in the physics literature.
It explains the rate of decay of the neutral pion into two photons.
It is influenced by the gravitational field via $\Adach(g)$ and by an external field (e.g.\ electromagnetic) via $\mathrm{ch}(F)$.

%%%%%%%%%%%%%%%%%%%%%%%%%%%%%%%%%%%%%%%%%%%%%%%%%%%%%%%%%%%%%%%%%%%%%%%%%%%%%%%%%%%%%%%%%%%%%%%%%%%%%%%%%%%%%%%%%%%%%%%%%%%%%%%%%%%%%%%%%%%%%

\section{Green-hyperbolic operators}

In this section we describe a rather large class of linear differential operators of various orders introduced in \cite{MR3302643} which generalize normally hyperbolic and Dirac operators and are characterized by the existence of so-called Green's operators.
It turns out that these operators share many of the good solution properties of normally hyperbolic and Dirac operators.

Let $E_1,E_2\to M$ be vector bundles over a globally hyperbolic manifold $M$.
Let $P:C^\infty(M,E_1) \to C^\infty(M,E_2)$ be a linear differential operator.
There is a unique linear differential operator acting on the dual bundles, $P^*:C^\infty(M,E_2^*) \to C^\infty(M,E_1^*)$, characterized by
\begin{equation}
\int_M \phi(Pf) \dV = \int_M (P^*\phi)(f) \dV
\label{eq:FormalDual}
\end{equation}
for all $f\in C^\infty(M,E_1)$ and $\phi\in C^\infty(M,E_2^*)$ such that $\supp f \cap \supp(\phi)$ is compact.
The operator $P^*$ is called the \emph{formally dual operator} of $P$.

\begin{dfn}\label{def:GreenOp}
An \emph{advanced Green's operator} of $P$ is a linear map $G_+:C^\infty_c(M,E_2)\to C^\infty(M,E_1)$ such that
\begin{enumerate}[(i)]
\item\label{eq:GreenOp1}
$G_+Pf=f$ for all $f\in C^\infty_c(M,E_1)$;
\item\label{eq:GreenOp2}
$PG_+f=f$ for all $f\in C^\infty_c(M,E_2)$;
\item\label{eq:GreenOp3}
$\supp(G_+f)\subset J^+(\supp f)$ for all $f\in C^\infty_c(M,E_2)$.
\end{enumerate}
A linear map $G_-:C^\infty_c(M,E_2)\to C^\infty(M,E_1)$ is called a \emph{retarded Green's operator} of $P$ if \eqref{eq:GreenOp1}, \eqref{eq:GreenOp2}, and 
\begin{enumerate}[(i)']
\setcounter{enumi}{2}
\item \label{eq:GreenOp4}
$\supp(G_-f)\subset J^-(\supp f)$ holds for every $f\in C^\infty_c(M,E_2)$.
\end{enumerate}
\end{dfn}
 
\begin{dfn}
The operator $P$ is be called \emph{Green hyperbolic} if $P$ and $P^*$ have advanced and retarded Green's operators.
\end{dfn}

It turns out that the advanced and retarded Green's operators of a Green-hyperbolic operator are automatically unique.
Prime examples for Green-hyperbolic operators are normally hyperbolic operators \cite[Cor.~3.4.3]{MR2298021}, Dirac-type operators \cite[Cor.~3.15]{MR3302643}, and symmetric hyperbolic systems \cite[Thm.~5.9]{MR3302643}.

Another example is provided by the \emph{Proca operator} describing massive vector bosons.
Here $E_1=E_2=T^*M$ and $m$ is a positive constant.
Then the Proca operator is given by $P=\delta d +m^2$ where $d$ is the exterior differential and $\delta$ the codifferential.
It is of second order but not normally hyperbolic.
Yet it is Green-hyperbolic.

The class of Green-hyperbolic operators is closed under a number of natural operations: composition, taking direct sums, dualizing, and restriction to suitable subregions.

The advanced Green's operator can be extended to sections with past-compact support in such way that the conditions in Definition~\ref{def:GreenOp} remain valid.
By condition (iii) the resulting section will again have past-compact support, $G_+:C^\infty_{pc}(M,E_2)\to C^\infty_{pc}(M,E_1)$.
Hence on $C^\infty_{pc}(M,E_2)$ the operator $G_+$ is the inverse of $P$ itself restricted to sections with past-compact support.
In particular, $P$ is invertible as an operator $C^\infty_{pc}(M,E_1)\to C^\infty_{pc}(M,E_2)$.
Similarly, $G_-$ extends to an operator on sections with future-compact support, $G_-:C^\infty_{fc}(M,E_2)\to C^\infty_{fc}(M,E_1)$.

The difference $G:=G_+-G_-$, sometimes called the \emph{causal propagator}, maps $C^\infty_c(M,E_2)$ to $C^\infty_{sc}(M,E_1)$, the space of smooth sections with spatially compact support.
Important information about the solution theory of Green-hyperbolic operators is encoded in the following

\begin{thm}\label{thm:exactsequence}
The sequence
\begin{equation*}
 \{0\}\to C^\infty_{c}(M,E_1)\xrightarrow{P} C^\infty_{c}(M,E_2)\xrightarrow{G} C^\infty_{sc}(M,E_1)\xrightarrow{P} C^\infty_{sc}(M,E_2) \to  \{0\}
\end{equation*}
is exact.
\end{thm}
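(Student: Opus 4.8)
The plan is to establish exactness at each of the four nontrivial spots in the sequence, using the advanced and retarded Green's operators $G_+, G_-$ and their extensions to sections with past/future-compact support. Throughout I write $G = G_+ - G_-$ for the causal propagator.

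\emph{Exactness at the first $C^\infty_c(M,E_1)$.} Here I must show $P$ is injective on $C^\infty_c(M,E_1)$. If $Pf = 0$ with $f \in C^\infty_c(M,E_1)$, then by property \eqref{eq:GreenOp1} of the advanced Green's operator, $f = G_+ P f = G_+ 0 = 0$. (Equivalently one could use $G_-$.) So $P$ is injective.

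\emph{Exactness at $C^\infty_c(M,E_2)$.} I must show $\ker\big(G: C^\infty_c(M,E_2)\to C^\infty_{sc}(M,E_1)\big) = \mathrm{im}\big(P: C^\infty_c(M,E_1)\to C^\infty_c(M,E_2)\big)$. For the inclusion $\supseteq$: if $h = Pf$ with $f\in C^\infty_c$, then $G_+ h = f = G_- h$ by property \eqref{eq:GreenOp1} applied to $G_+$ and $G_-$, so $Gh = 0$. For $\subseteq$: suppose $h\in C^\infty_c(M,E_2)$ with $Gh = 0$, i.e.\ $G_+ h = G_- h =: f$. By property \eqref{eq:GreenOp3}, $\supp(G_+h)\subset J^+(\supp h)$, and by \eqref{eq:GreenOp4}, $\supp(G_-h)\subset J^-(\supp h)$; hence $\supp f \subset J^+(\supp h)\cap J^-(\supp h) = J(\supp h)\cap\big(\text{something}\big)$ — more precisely $\supp f$ is contained in a set which is both past-compact and future-compact, hence (since $M$ is globally hyperbolic) compact. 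Thus $f\in C^\infty_c(M,E_1)$, and by property \eqref{eq:GreenOp2} applied to $G_+$, $Pf = P G_+ h = h$. So $h\in\mathrm{im}(P)$.

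\emph{Exactness at $C^\infty_{sc}(M,E_1)$ and surjectivity at $C^\infty_{sc}(M,E_2)$.} For exactness at $C^\infty_{sc}(M,E_1)$: the inclusion $\mathrm{im}(G)\subseteq\ker(P)$ follows from property \eqref{eq:GreenOp2}, since $P G_\pm h = h$ gives $PGh = h - h = 0$. Conversely, given $u\in C^\infty_{sc}(M,E_1)$ with $Pu = 0$, I want to produce $h\in C^\infty_c(M,E_2)$ with $Gh = u$. The standard device is to split $u = u_+ + u_-$ using a smooth cutoff $\chi$ adapted to a Cauchy hypersurface: choose a Cauchy temporal function and set $u_+ = \chi u$, $u_- = (1-\chi)u$, with $\supp(\chi)$ past-compact and $\supp(1-\chi)$ future-compact when intersected with $\supp(u)$ (spatially compact); then $h := P u_+ = -P u_-$ has support which is both past- and future-compact, hence compact, so $h\in C^\infty_c(M,E_2)$. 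One then checks $G_+ h = u_+$ and $G_- h = -u_-$ using that $G_\pm$ inverts $P$ on sections with past/future-compact support (the extension of $G_\pm$ discussed just before the theorem), so that $Gh = u_+ + u_- = u$. Finally, surjectivity of $P: C^\infty_{sc}(M,E_2)\to C^\infty_{sc}(M,E_2)$ — wait, the last arrow is $P: C^\infty_{sc}(M,E_1)\to C^\infty_{sc}(M,E_2)$; surjectivity here means every $v\in C^\infty_{sc}(M,E_2)$ is $Pu$ for some $u\in C^\infty_{sc}(M,E_1)$. Again split $v = v_+ + v_-$ with $v_+$ of past-compact support and $v_-$ of future-compact support; set $u = G_+ v_+ + G_- v_-$, which lies in $C^\infty_{sc}(M,E_1)$ because each term has support in the causal future (resp.\ past) of a spatially compact set intersected appropriately, and apply $P$ termwise using property \eqref{eq:GreenOp2} (in its extended form) to get $Pu = v_+ + v_- = v$.

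\emph{Main obstacle.} The routine algebraic identities ($P$ injective, $\mathrm{im}(P)\subseteq\ker(G)$, $\mathrm{im}(G)\subseteq\ker(P)$) are immediate from Definition~\ref{def:GreenOp}. The real work — and the step I expect to be most delicate to write carefully — is the support bookkeeping in the two ``hard'' directions: showing that a section which is simultaneously past-compact and future-compact is compactly supported (this uses global hyperbolicity, specifically the structure $M\cong\R\times S$ from Theorem~\ref{thm:globhyp}(3) together with compactness of causal diamonds), and constructing the splitting $u = u_+ + u_-$ via a temporal cutoff so that $Pu_\pm$ has the right support properties and $G_\pm$ genuinely reproduces $u_\pm$. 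One must also invoke the extension of $G_\pm$ to past-/future-compactly supported sections and the fact, stated in the excerpt, that on such sections $G_\pm$ is a two-sided inverse of $P$; granting that, the argument closes.
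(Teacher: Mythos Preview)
The paper is a survey and states this theorem without proof, deferring to \cite{MR3302643}. Your argument is correct and is precisely the standard proof found there: injectivity of $P$ and the easy inclusions come directly from the defining properties of $G_\pm$; compactness of $\supp f$ at the second node uses that $J^+(K)\cap J^-(K)$ is compact for compact $K$ in a globally hyperbolic manifold; and the two harder steps use a temporal cutoff together with the extension of $G_\pm$ to sections with past- or future-compact support, exactly as you outline (and your identification of the support bookkeeping as the only delicate point is accurate).
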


The operator $P$ itself and its advanced and retarded Green's operators extend to distributional sections.
These extensions have essentially the same properties;
in particular, the analogue of Theorem~\ref{thm:exactsequence} holds \cite[Thm.~4.3]{MR3302643}.
For applications in algebraic quantum field theory on curved spacetimes see e.g.\ \cite{MR3289848,MR2298021}.

%%%%%%%%%%%%%%%%%%%%%%%%%%%%%%%%%%%%%%%%%%%%%%%%%%%%%%%%%%%%%%%%%%%%%%%%%%%%%%%%%%%%%%%%%%%%%%%%%%%%%%%%%%%%%%%%%%%%%%%%%%%%%%%%%%%%%%%%%%%%%
% 
% \begin{acknowledgement}
% 
% \end{acknowledgement}
% 
%%%%%%%%%%%%%%%%%%%%%%%%%%%%%%%%%%%%%%%%%%%%%%%%%%%%%%%%%%%%%%%%%%%%%%%%%%%%%%%%%%%%%%%%%%%%%%%%%%%%%%%%%%%%%%%%%%%%%%%%%%%%%%%%%%%%%%%%%%%%%

\bibliographystyle{plain}
%\bibliography{wave-survey}

\begin{thebibliography}{10}

\bibitem{2016arXiv160106084A}
S.~{Aksteiner}, L.~{Andersson}, and T.~{B{\"a}ckdahl}.
\newblock {On the structure of linearized gravity on vacuum spacetimes of
  Petrov type D}.
\newblock {\em arXiv:1601.06084}, 2016.

\bibitem{AkBae15}
S.~{Aksteiner} and T.~{B\"{a}ckdahl}.
\newblock Spin{F}rames, 2015.
\newblock \href{http://www.xact.es/SpinFrames}{http://www.xact.es/SpinFrames}.

\bibitem{ABB:symop:2014CQGra..31m5015A}
L.~{Andersson}, T.~{B{\"a}ckdahl}, and P.~{Blue}.
\newblock Second order symmetry operators.
\newblock {\em Classical and Quantum Gravity}, 31(13):135015, 2014.

\bibitem{2015arXiv150402069A}
L.~{Andersson}, T.~{B{\"a}ckdahl}, and P.~{Blue}.
\newblock Spin geometry and conservation laws in the {K}err spacetime.
\newblock In L.~Bieri and S.-T. Yau, editors, {\em One hundred years of general
  relativity}, pages 183--226. International Press, Boston, 2015.
\newblock \arxivref{1504.02069}.

\bibitem{2014arXiv1412.2960A}
L.~{Andersson}, T.~{B{\"a}ckdahl}, and P.~{Blue}.
\newblock A new tensorial conservation law for {M}axwell fields on the {K}err
  background.
\newblock {\em Journal of Differential Geometry}, 105(2):163--176, 2017.

\bibitem{AnderssonBlue:KerrWave}
L.~Andersson and P.~Blue.
\newblock Hidden symmetries and decay for the wave equation on the {K}err
  spacetime.
\newblock {\em Annals of Mathematics}, 182(3):787--853, 2015.

\bibitem{andersson:blue:maxwell:2013arXiv1310.2664A}
L.~{Andersson} and P.~{Blue}.
\newblock {Uniform energy bound and asymptotics for the {M}axwell field on a
  slowly rotating {K}err black hole exterior}.
\newblock {\em Journal of Hyperbolic Differential Equations}, 12(04):689--743,
  2015.

\bibitem{Bae11a}
T.~{B\"{a}ckdahl}.
\newblock Sym{M}anipulator, 2011-2014.
\newblock
  \href{http://www.xact.es/SymManipulator}{http://www.xact.es/SymManipulator}.

\bibitem{MR3302643}
C.~B{\"a}r.
\newblock Green-hyperbolic operators on globally hyperbolic spacetimes.
\newblock {\em Communications in Mathematical Physics}, 333(3):1585--1615,
  2015.

\bibitem{MR3289848}
C.~B{\"a}r and N.~Ginoux.
\newblock Classical and quantum fields on {L}orentzian manifolds.
\newblock In {\em Global differential geometry}, volume~17 of {\em Springer
  Proc. Math.}, pages 359--400. Springer, Heidelberg, 2012.

\bibitem{MR2298021}
C.~B{\"a}r, N.~Ginoux, and F.~Pf{\"a}ffle.
\newblock {\em Wave equations on {L}orentzian manifolds and quantization}.
\newblock ESI Lectures in Mathematics and Physics. European Mathematical
  Society (EMS), Z\"urich, 2007.

\bibitem{baer2015index}
C.~B{\"a}r and A.~Strohmaier.
\newblock An index theorem for {L}orentzian manifolds with compact spacelike
  {C}auchy boundary.
\newblock {\em arXiv:1506.00959}, 2015.
\newblock To appear in American Journal of Mathematics.

\bibitem{baer2015rigorous}
C.~B{\"a}r and A.~Strohmaier.
\newblock A rigorous geometric derivation of the chiral anomaly in curved
  backgrounds.
\newblock {\em Communications in Mathematical Physics}, 347(3):703--721, 2016.

\bibitem{MR3316713}
C.~B{\"a}r and R.~{Tagne Wafo}.
\newblock Initial value problems for wave equations on manifolds.
\newblock {\em Mathematical Physics, Analysis and Geometry}, 18(1):Art. 7, 29,
  2015.

\bibitem{MR701244}
H.~Baum.
\newblock {\em Spin-{S}trukturen und {D}irac-{O}peratoren \"uber
  pseudoriemannschen {M}annigfaltigkeiten}, volume~41 of {\em Teubner-Texte zur
  Mathematik}.
\newblock Teubner Verlagsgesellschaft, Leipzig, 1981.

\bibitem{MR1384756}
J.~K. Beem, P.~E. Ehrlich, and K.~L. Easley.
\newblock {\em Global {L}orentzian geometry}, volume 202 of {\em Monographs and
  Textbooks in Pure and Applied Mathematics}.
\newblock Marcel Dekker, New York, second edition, 1996.

\bibitem{MR2163568}
A.~N. Bernal and M.~S{\'a}nchez.
\newblock Smoothness of time functions and the metric splitting of globally
  hyperbolic spacetimes.
\newblock {\em Communications in Mathematical Physics}, 257(1):43--50, 2005.

\bibitem{MR2294243}
A.~N. Bernal and M.~S{\'a}nchez.
\newblock Globally hyperbolic spacetimes can be defined as `causal' instead of
  `strongly causal'.
\newblock {\em Classical and Quantum Gravity}, 24(3):745--749, 2007.

\bibitem{Buchdahl58}
H.~A. Buchdahl.
\newblock On the compatibility of relativistic wave equations for particles of
  higher spin in the presence of a gravitational field.
\newblock {\em Il Nuovo Cimento}, 10(1):96--103, 1958.

\bibitem{1969CMaPh..14..329C}
Y.~{Choquet-Bruhat} and R.~{Geroch}.
\newblock Global aspects of the cauchy problem in general relativity.
\newblock {\em Communications in Mathematical Physics}, 14:329--335, 1969.

\bibitem{2014arXiv1402.7034D}
M.~{Dafermos}, I.~{Rodnianski}, and Y.~{Shlapentokh-Rothman}.
\newblock Decay for solutions of the wave equation on {K}err exterior
  spacetimes {III}: The full subextremal case {$|a| < M$}.
\newblock {\em Annals of Mathematics}, 183:787--913, 2016.

\bibitem{MR0424186}
S.~W. Hawking and G.~F.~R. Ellis.
\newblock {\em The large scale structure of space-time}.
\newblock Cambridge University Press, London-New York, 1973.
\newblock Cambridge Monographs on Mathematical Physics, No. 1.

\bibitem{xAct}
J.~M. Mart\'{\i}n-Garc\'{\i}a.
\newblock x{A}ct: {E}fficient tensor computer algebra for {M}athematica,
  2002-2014.
\newblock \href{http://www.xact.es}{http://www.xact.es}.

\bibitem{MR719023}
B.~O'Neill.
\newblock {\em Semi-{R}iemannian geometry. With applications to relativity},
  volume 103 of {\em Pure and Applied Mathematics}.
\newblock Academic Press, New York, 1983.

\bibitem{MR757180}
R.~M. Wald.
\newblock {\em General relativity}.
\newblock University of Chicago Press, Chicago, 1984.

\bibitem{walker:penrose:1970CMaPh..18..265W}
M.~{Walker} and R.~{Penrose}.
\newblock On quadratic first integrals of the geodesic equations for type
  $\{$2,2$\}$ spacetimes.
\newblock {\em Communications in Mathematical Physics}, 18:265--274, 1970.

\end{thebibliography}

\end{document}